\documentclass{article}
\usepackage{amsmath,amssymb,amsthm,amsfonts}
\usepackage{graphicx}
\usepackage{authblk,hyperref}
\usepackage{multicol,multirow}
\usepackage{mathrsfs}
\usepackage{cleveref}
\usepackage{xcolor}
\usepackage{enumitem}
\newtheorem*{theorem*}{Theorem}
\newtheorem{theorem}{Theorem}[section]
\newtheorem{lemma}[theorem]{Lemma}
\newtheorem{definition}[theorem]{Definition}
\newtheorem{remark}[theorem]{Remark}

\newtheorem*{conjecture*}{Conjecture}
\newtheorem{conjecture}[theorem]{Conjecture}
\newtheorem*{question*}{Question}
\newtheorem{question}[theorem]{Question}
\newtheorem{corollary}[theorem]{Corollary}
\numberwithin{theorem}{section}
\numberwithin{equation}{section}

\begin{document}
\title{T-Fermat integers}
\author[1]{Tigran Hakobyan}
\affil[1]{\small{American University of Armenia, Armenia, \textit{thakobyan@aua.am}}}
\footnotetext[1]{This work was supported by the Higher Education and Science Committee of RA (Research
Project No 24RL-1A028)}
\maketitle
\abstract{
We introduce the concept of a T-Fermat integer, which generalizes the notion of a prime number. We show that any composite T-Fermat integer, if one exists, must be a Carmichael number. We prove several properties of T-Fermat integers and conjecture that there are infinitely many composite T-Fermat integers. Together with a further structural conjecture, this suggests a possible route toward proving the infinitude of primes $p$ such that $\omega(p-1)\leq 2$.
\\\\\indent
\textbf{Keywords:} Carmichael numbers, prime numbers.
\\\indent
\textbf{AMS MSC Classification:} 
11A41, 11A51, 11N80
}

\section{Introduction}

Fermat's little theorem states that $a^p\equiv a(\text{mod}\ p)$ for every integer $a$ and every prime $p$. One might ask whether the converse holds.
\begin{question*}
\textit{If $n>1$ is a positive integer such that $a^n\equiv a(\text{mod}\ n)$ for every integer $a$, must $n$ be prime?}
\end{question*}
In 1910, Carmichael \cite{carmichael1910note}, using Korselt's criterion \cite{Korselt1899}, gave the counterexample $n=561=3\times11\times 17$, thus resolving the question negatively. This led him to introduce Carmichael numbers as composite numbers satisfying the aforementioned congruence. 

A significant amount of research has been carried out on Carmichael numbers. Specifically, in \cite{carmichael1912composite} Carmichael conjectured the infinitude of these numbers. This was proved only in 1994 by Alford, Granville, and Pomerance \cite{alford1994there}. In particular, they established the inequality $C(x)> x^{2/7}$ for sufficiently large $x$ where $C(x)$ denotes the number of Carmichael numbers up to $x.$ Another result in this direction, due to Banks and Pomerance \cite{banks2010carmichael}, is in the spirit of Dirichlet’s theorem on arithmetic progressions. Notably, the authors prove that there are infinitely many Carmichael numbers in a given arithmetic progression $(a+lm)_{l\geq 1}$ with $\gcd(a,m)=1.$ A refinement of this work was obtained by D. Larsen \cite{larsen2025carmichael}, who proved that any arithmetic progression either contains no Carmichael numbers or infinitely many.

In this paper, we take a different approach by introducing a new class of integers that generalizes the primes. First, note that Fermat's little theorem can be rewritten in a slightly different form: 
\begin{equation*}
    p|(x^p+x)-2x \quad\text{for any }x\in\mathbb{Z}.
\end{equation*}
 This observation motivated us to introduce the following two definitions.

\begin{definition}
    For a positive integer $n$ we define the polynomial $$T_n(x)=\sum_{d|n}x^d-d(n)x.$$
\end{definition}
\begin{definition}
    A positive integer $n>1$ is called a \textbf{weakly T-Fermat integer} if $$n|T_n(x)$$ for every integer $x$. If, in addition, $n$ is square-free, then $n$ is called a \textbf{T-Fermat integer}. 
\end{definition}
Clearly, any prime is a T-Fermat integer. Meanwhile, an easy check shows that the number $4$ is a weakly T-Fermat integer, yet not a prime. This naturally leads to the following 
\begin{question}\label{ExistAlmostPrime}
Are there any composite weakly T-Fermat integers greater than four? Are there any composite T-Fermat integers? 
\end{question}
In this paper, we focus on T-Fermat integers. Initially, we believed that no composite T-Fermat integers existed, and we attempted to prove this using the following straightforward argument: pick any prime divisors $p$ and $q$ of a T-Fermat integer $n$ and show that $p-1$ and $q-1$ share the same divisors. This turns out to be true for certain special divisors, such as powers of two and Fermat primes. Specifically, we prove the following

\begin{theorem}\label{powerOf_two_and_Fermat}
Let $n$ be a T-Fermat integer. Then
\begin{enumerate}
\item The value $\nu_2(p-1)$ is independent of the choice of prime divisor $p$ of $n.$ 
\item  If a Fermat prime $r$ divides $p-1$ for some prime divisor $p$ of $n$, then $r|q-1$ for every prime divisor $q$ of $n.$
\end{enumerate}
\end{theorem}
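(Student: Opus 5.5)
The plan is to reduce the divisibility $n\mid T_n(x)$, one prime at a time, to a statement about how the divisors of $n$ are distributed in residue classes. Then a multiplicative character will force a contradiction.

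\textbf{Local reduction.} Fix a prime $q\mid n$ and write $n=qm$ with $q\nmid m$ (possible since $n$ is square-free), and let $k=\omega(m)$, so $d(n)=2d(m)=2^{k+1}$. The divisors of $n$ are the $e$ and the $qe$ with $e\mid m$. Since $x^{qe}\equiv x^e \pmod q$, the hypothesis gives $2\sum_{e\mid m}x^e\equiv 2^{k+1}x\pmod q$ for all $x$. For odd $q$ this becomes
\[
\sum_{e\mid m}x^e\equiv 2^k x \pmod q \quad\text{for all } x.
\]
Now let $L\mid q-1$ and let $\zeta\in\mathbb F_q$ have order $L$. Substitute $x=\zeta^j$, multiply by $\zeta^{-js}$ and sum over $j \bmod L$. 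Since $L$ is invertible mod $q$, this yields, for the counts $M_s=\#\{e\mid m: e\equiv s \pmod L\}$,
\[
M_s\equiv 2^k\,[s\equiv 1 \pmod L]\pmod q .
\]
Next fix a Dirichlet character $\psi$ mod $L$ with values in $\mathbb Z[\zeta_L]$ (value $0$ on non-units), and a prime $\mathfrak q$ above $q$. Multiplicativity gives $\sum_s M_s\psi(s)=\prod_{\ell\mid m}(1+\psi(\ell))$, so
\[
\prod_{\ell\mid m}(1+\psi(\ell))\equiv 2^k \pmod{\mathfrak q}.
\]
If some $\psi(\ell)=-1$, the left side vanishes, forcing $\mathfrak q\mid 2^k$, which is impossible for odd $q$. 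This is the engine of the proof.

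\textbf{Part 1.} First I will dispose of even $n$. If $n$ is even and composite, pick an odd $q\mid n$ and put $x=-1$ in the local identity. Exactly half of the divisors of the even number $m$ are odd, so $\sum_{e\mid m}(-1)^e=0\not\equiv -2^k\pmod q$, a contradiction. Hence $n$ is odd (or $n=2$, which is trivial). Now suppose $p,q\mid n$ with $a=\nu_2(p-1)<b=\nu_2(q-1)$, and take $L=2^b$ in the local reduction at $q$; note $p\mid m$. I need a character $\psi$ of $(\mathbb Z/2^b)^\*$ with $\psi(p)=-1$. If $a=1$, then $p\equiv 3\pmod 4$ and the character mod $4$ works. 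If $a\ge 2$, then $p$ lies in the cyclic subgroup $\langle 5\rangle$ and has order $2^{b-a}\ge 2$ there. So some character of $\langle5\rangle$ sends $p$ to $-1$, and it extends to the whole group. The engine then gives the contradiction.

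\textbf{Part 2.} Let $r=2^{2^t}+1$ be a Fermat prime with $r\mid p-1$, and suppose $r\nmid q-1$ for some prime $q\mid n$. First I must exclude $q=r$. For this I would use the result stated in the abstract that composite T-Fermat integers are Carmichael numbers: Korselt's criterion gives $p-1\mid n-1$, hence $r\mid n-1$ and $r\nmid n$.

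So $q\ne r$, $q\not\equiv 1\pmod r$, and $q$ is a unit mod $r$. Run the local reduction at $p$ with $L=r$, which is legitimate because $r\mid p-1$. The group $(\mathbb Z/r)^\*$ is cyclic of order $2^{2^t}$, and $q$ is a nontrivial element of $2$-power order in it. Hence there is a character $\psi$ mod $r$ with $\psi(q)=-1$, and the engine again yields $\mathfrak p\mid 2^k$, a contradiction.

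The main obstacle is the passage from ``for all $x$'' to the congruences for the counts $M_s$, and then to the character-product identity. In particular the division by $L$ and the choice of roots of unity inside $\mathbb F_q$ must be handled carefully. The dependence on the Carmichael property for excluding $q=r$ is the other delicate point. If that result is not yet available at this stage, I would instead derive $r\nmid n$ directly by the same character trick with $\psi$ taken mod $r$ at the prime $p$.
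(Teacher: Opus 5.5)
Your proof is correct, but it takes a genuinely different route from the paper's proof of this theorem.

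\textbf{The paper's route.} It first proves Lemma~\ref{OrderIsOdd}: the order of $q$ modulo $p-1$ is odd for all primes $p,q\mid n$. The argument stays entirely inside $\mathbb{F}_p$. It picks $a$ with $\sum_{i<\ell}(-1)^i a^{q^i}\neq 0$ (Lemma~\ref{AuxLemma}), follows the sequence $a_{i+1}=-a_i^{q}$, and telescopes $\sum_i\sum_{d\mid n}a_i^d=0$ against $d(n)\sum_i a_i$. Both parts then follow from elementary facts: $\frac{q^\ell-1}{q-1}$ is odd for odd $\ell$, and $\gcd(\ell,r-1)=1$. The case $q=r$ is excluded for free by Corollary~\ref{CoprimeWithGcd}, which is needed anyway for the order to be defined.

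\textbf{Your route.} You Fourier-invert over $L$-th roots of unity in $\mathbb{F}_q$ to get the divisor counts of $n/q$ modulo $q$. You then twist by a Dirichlet character to obtain $\prod_{\ell\mid m}(1+\psi(\ell))\equiv 2^k$. This is in substance the mechanism of Theorem~\ref{MainTheoremSingle} and Corollary~\ref{FermatPrimeRefined}, namely evaluating $f$ at a root of unity with $1+\eta^\ell=0$. The difference is that characters replace the diagonalization of a cyclic permutation matrix. As a result you are not restricted to odd prime-power moduli, and you handle the non-cyclic group $(\mathbb{Z}/2^b\mathbb{Z})^*$ directly.

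Your engine is at least as strong as the paper's key lemma. A character with $\psi(\ell)=-1$ exists exactly when $\ell$ has even order modulo $L$. So taking $L=p-1$ at the prime $p$ recovers Lemma~\ref{OrderIsOdd}, and your choices $L=2^b$ and $L=r$ are targeted instances of it.

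\textbf{The cost of your route.} You must treat even $n$ and the case $q=r$ separately. Citing Theorem~\ref{AlmostPrimeIsCarmichael} for $q=r$ is not circular, since its proof uses only Lemma~\ref{KeyLemma} and Corollary~\ref{CoprimeWithGcd}, but it is heavier than needed.

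Your fallback needs one adjustment. A nontrivial character mod $r$ has $\psi(r)=0$, so the factor for $r$ is $1$ and nothing vanishes. What works is the principal character, which gives $2^{k-1}\equiv 2^k \pmod{\mathfrak{p}}$. Equivalently, use your count $M_0=2^{k-1}\not\equiv 0 \pmod p$ with $L=r$; this is exactly the argument of Corollary~\ref{CoprimeWithGcd}.
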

For a general divisor, however, the situation becomes more delicate and leads to substantially more complicated conditions. The second assertion of Theorem \ref{powerOf_two_and_Fermat} was also posed as a problem at the 2024 International Mathematics Competition for University Students \cite{IMC}.

The next crucial turning point came when we discovered that any composite T-Fermat integer must be a Carmichael number.

\begin{theorem}\label{AlmostPrimeIsCarmichael}
Any T-Fermat integer is either prime or a Carmichael number.
\end{theorem}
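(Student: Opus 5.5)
The plan is to verify Korselt's criterion: $n$ is square-free by definition, so if $n$ is composite it suffices to show $p-1\mid n-1$ for every prime $p\mid n$. I would also show separately that $n$ is odd.

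\textbf{Step 1: pass to a congruence modulo $p$.} Fix a prime $p\mid n$ and write $n=pm$ with $p\nmid m$. The divisors of $n$ are exactly $d$ and $pd$ for $d\mid m$, and $d(n)=2d(m)$. Working in $\mathbb{F}_p$, Fermat gives $x^{pd}=(x^d)^p=x^d$. Hence $n\mid T_n(x)$ yields
$$2\Big(\sum_{d\mid m}x^d-d(m)x\Big)\equiv 0 \pmod p\quad\text{for all }x,$$
that is, $p\mid 2T_m(x)$ for every $x$.

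\textbf{Step 2: compare coefficients.} For odd $p$, divide by $2$ and by $x\neq0$. The function
$$\sum_{d\mid m}x^{d-1}-d(m)$$
then vanishes on all of $\mathbb{F}_p^*$. Reduce each exponent $d-1$ modulo $p-1$ into $\{0,\dots,p-2\}$. This gives a polynomial of degree at most $p-2$ with $p-1$ roots, so every coefficient is $0$ in $\mathbb{F}_p$. Let $N_r=\#\{d\mid m: d\equiv r \pmod{p-1}\}$. The coefficients give
$$N_1\equiv d(m)\pmod p \quad\text{and}\quad N_r\equiv 0\pmod p \text{ for } r\not\equiv 1.$$
The goal is to upgrade this to $N_r=0$ for all $r\not\equiv1$. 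That yields $m\equiv1\pmod{p-1}$, and therefore $n=pm\equiv m\equiv 1\pmod{p-1}$.

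\textbf{Step 3: the main obstacle.} The difficulty is excluding classes $r\not\equiv 1$ with $N_r$ a positive multiple of $p$. If $p>d(m)=2^{\omega(n)-1}$ this is automatic, since each $N_r\le d(m)<p$. In general I would exploit the product structure: because $m$ is square-free, the divisor multiset is the product of the pairs $\{1,q\}$ over the primes $q\mid m$.

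The first attempt would be induction on $\omega(n)$, applied to the same congruences for sub-products. Concretely, if some prime $q\mid m$ has $q\not\equiv 1\pmod{p-1}$, multiplication by $q$ maps the divisors of $m/q$ bijectively onto the divisors of $m$ divisible by $q$. One would then try to show that a nonzero class count below $p$ must appear, giving a contradiction.

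Alternatively, one could order the primes of $n$ and treat the largest prime first, where the bound $p>2^{\omega(n)-1}$ is more plausible. The resulting divisibilities $p-1\mid n-1$ would then be used to constrain the smaller primes. I expect this combinatorial step to require the most care.

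\textbf{Step 4: parity.} If $n$ were even and composite, it would have an odd prime factor $p$. Step 3 gives $p-1\mid n-1$, where $p-1$ is even and $n-1$ is odd, which is a contradiction. So a composite T-Fermat integer is odd, and the case $p=2$ never needs the coefficient argument. Korselt's criterion then shows that $n$ is a Carmichael number.
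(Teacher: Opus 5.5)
Your Steps 1--2 are correct; they are Lemma~\ref{KeyLemma} with modulus $p-1$, rewritten for $m=n/p$. Step~3, however, is a genuine gap, and the target you set there is not just unproven but false. You aim to show $N_r=0$ for every class $r\not\equiv 1\pmod{p-1}$, at every prime $p\mid n$.

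Suppose $n$ is composite, let $p_0$ be its smallest and $q$ its largest prime factor, and run your setup at the prime $q$ with $m=n/q$. Since $1\le p_0-1<q-1$, the divisor $p_0$ of $m$ lies in a nontrivial class modulo $q-1$. That class therefore has $N_{p_0}\ge 1$, hence $N_{p_0}\ge q$ by your own Step~2.

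So your intended lemma would prove that no composite T-Fermat integers exist at all. That is far stronger than the theorem, and contrary to what the paper conjectures (Conjecture~\ref{InfinitelyManyComposite}). The same computation gives $q<d(m)=2^{\omega(n)-1}$ at the largest prime (essentially Corollary~\ref{PrimeDivisorBound}). So your ``automatic'' case $p>d(m)$ never occurs there, and the ``largest prime first'' idea is backwards. Neither of your proposed routes can close Step~3.

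The missing idea is that you never need to empty the nontrivial classes. You only need the class of $m$ itself to be the class of $1$, and this follows from the symmetry $d\mapsto m/d$ of the divisor set.

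\emph{Every divisor of $m$ is a unit modulo $p-1$.} Suppose a prime $r\mid m$ also divided $p-1$. Sum your congruences $N_c\equiv 0\pmod p$ over all classes $c$ modulo $p-1$ with $c\equiv 0\pmod r$; none of these is the class of $1$. This gives $2^{\omega(n)-2}=\#\{d\mid m: r\mid d\}\equiv 0\pmod p$, which is impossible for odd $p$. This is Corollary~\ref{CoprimeWithGcd}.

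\emph{Complementation.} Consequently $d\equiv 1\pmod{p-1}$ if and only if $m/d\equiv m\pmod{p-1}$. So $d\mapsto m/d$ is a bijection between the divisors in the class of $1$ and those in the class of $m$. The count $N_m$ of divisors congruent to $m$ therefore satisfies $N_m=N_1\equiv 2^{\omega(n)-1}\not\equiv 0\pmod p$.

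\emph{Conclusion.} Every nontrivial class has count divisible by $p$, so $m\equiv 1$, and hence $n=pm\equiv 1\pmod{p-1}$. This is precisely the paper's proof, phrased there with $n$ in place of $m$. With it, your Step~4 and the appeal to Korselt's criterion go through.
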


This result shifted the focus of our investigation toward the search for composite T-Fermat integers. In particular, we aim to view the set of composite T-Fermat integers as a special  
subset of Carmichael numbers. Thus, the following strengthened version of Question \ref{ExistAlmostPrime} arises naturally.
\begin{conjecture}\label{InfinitelyManyComposite}
There are infinitely many composite T-Fermat integers.  
\end{conjecture}
In trying to identify a T-Fermat integer, we sought to translate the T-Fermat integer condition into the language of polynomials, algebraic number theory, and linear algebra. These areas provide a rich toolkit of advanced methods, allowing us to establish the following key result:

\begin{theorem}\label{MainTheoremSingle}
     Let $n$ be a T-Fermat integer, let $p|n$ be prime, and let $s=q^{\nu}$ be a divisor of $p-1$, where $q$ is an odd prime. Let $g$ be a generator of the group $(\mathbb{Z}/s\mathbb{Z})^{*}$, and for each residue class $r\in (\mathbb{Z}/s\mathbb{Z})^{*}$ let $\omega_r$ denote the number of prime divisors of $n$ congruent to $r$ modulo $s$.
     Let $K=\mathbb{Q}(\zeta),$ where $\zeta$ is a primitive $\varphi(s)$-th root of unity and let $\mathcal{O}_K=\mathbb{Z}[\zeta]$ be its ring of integers. Then the congruence $f(\eta)\equiv d(n)(\text{mod}\ p\mathcal{O}_K)$ holds for each $\varphi(s)$-th root of unity $\eta$, where $f(x)=\displaystyle\prod_{\ell=1}^{\varphi(s)}\left(1+x^\ell\right)^{\omega_{g^\ell}}.$
\end{theorem}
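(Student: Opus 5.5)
The plan is to read the T-Fermat condition modulo $p$ as a statement about additive characters of $\mathbb{Z}/s\mathbb{Z}$, and then use Gauss sums to turn it into a statement about multiplicative characters. The key point is that $f(\eta)$ is exactly a multiplicative character sum over the divisors of $n$. Concretely, let $\chi$ be the character of $(\mathbb{Z}/s\mathbb{Z})^*$ with $\chi(g^k)=\eta^k$, extended by $\chi(m)=0$ when $q\mid m$. Write the primes of $n$ coprime to $q$ as $r_1,\dots,r_k$. Then
$$\sum_{d\mid n}\chi(d)=\prod_{i}(1+\chi(r_i))=f(\eta),$$
since a divisor is a subset of prime factors and each prime $r\equiv g^\ell \pmod s$ contributes a factor $1+\eta^\ell$. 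So it suffices to show $\sum_{d\mid n}\chi(d)\equiv d(n)$ modulo $p$.

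First I would check that $q\nmid n$. If $n=p$ this holds because $q\mid p-1$. Otherwise, by Theorem \ref{AlmostPrimeIsCarmichael}, $n$ is Carmichael, so Korselt gives $p-1\mid n-1$, hence $q\mid n-1$. Consequently every divisor $d$ of $n$ is coprime to $q$, $\chi(d)$ is never $0$, and $\omega_r$ accounts for all prime factors of $n$. This settles the trivial character $\eta=1$ at once, since then $f(1)=2^{\omega(n)}=d(n)$.

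For nontrivial $\chi$, I would let $q^\mu$ with $1\le\mu\le\nu$ be its conductor and view $\chi$ as a primitive character modulo $s'=q^\mu$. This changes no value $\chi(d)$, because all $d$ are coprime to $q$. Since $s'\mid p-1$, pick $a\in\mathbb{Z}$ of multiplicative order $s'$ modulo $p$. Work in $R=\mathbb{Z}[\zeta,\zeta_{s'}]$ modulo a prime $\mathfrak{P}$ above $p$, chosen so that $\zeta_{s'}\equiv a$. Apply the T-Fermat condition with $x=a^t$ for $t\in\mathbb{Z}/s'\mathbb{Z}$:
$$\sum_{d\mid n}a^{td}\equiv d(n)\,a^t \pmod p.$$
Multiply by $\bar\chi(t)$ and sum over $t$. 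With $G=\sum_t\bar\chi(t)a^t$, the standard identity $\sum_t\bar\chi(t)a^{td}=\chi(d)G$ holds for $\gcd(d,q)=1$, which covers every divisor. This yields
$$\Bigl(\sum_{d\mid n}\chi(d)-d(n)\Bigr)G\equiv 0 \pmod{\mathfrak{P}}.$$

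The main obstacle is showing that $G$ is invertible modulo $\mathfrak{P}$ and then descending from $\mathfrak{P}$ back to $p\mathcal{O}_K$. For invertibility, $G$ is the reduction of a genuine Gauss sum of a primitive character, so $G\cdot\overline{G}=\chi(-1)s'$ also holds modulo $\mathfrak{P}$, and $p\nmid s'$. For the descent, note that $\sum_d\chi(d)-d(n)=f(\eta)-d(n)$ lies in $\mathcal{O}_K$. The argument works for every prime $\mathfrak{P}$ above $p$, since each one can be matched with a suitable $a$ (equivalently, apply Galois conjugates). Hence the element lies in every prime of $\mathcal{O}_K$ above $p$. Finally, $p$ is unramified in $K$ because $p\nmid\varphi(s)$, as $\varphi(s)<p$. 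So the product of those primes is $p\mathcal{O}_K$, which gives $f(\eta)\equiv d(n)\pmod{p\mathcal{O}_K}$.
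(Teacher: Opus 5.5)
Your proof is correct, but it reaches the congruence by a different mechanism than the paper.

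\textbf{The paper's route.} The paper first applies Lemma \ref{KeyLemma} with $m=s$. This shows that the number $N_r$ of divisors of $n$ in each class $r\not\equiv 1\pmod s$ is divisible by $p$, so $N_1\equiv d(n)\pmod p$. It then encodes these counts as $\delta_n=f(P_g)e_1$ and diagonalizes the cyclic permutation matrix $P_g$. Without the matrix language, this is exactly applying your multiplicative character $\chi$ to the integer count vector: $f(\eta)=\sum_{d\mid n}\chi(d)=\sum_r\chi(r)N_r=d(n)+p\sum_r\chi(r)v_r$ with $v_r\in\mathbb{Z}$. The error term therefore lies visibly in $p\mathbb{Z}[\zeta]$. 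No Gauss sums, conductors, primes above $p$, or unramifiedness are needed, and the Kronecker-product version of the same computation gives Theorem \ref{MainTheoremMultiple} directly.

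\textbf{Your route.} You skip the divisor-count lemma and instead Fourier-invert the T-Fermat congruence evaluated on the $s'$-torsion of $\mathbb{F}_p^{*}$, turning additive characters into multiplicative ones inside residue fields. That is why you must pass to the conductor (the Gauss sum modulo $q^\nu$ of a nontrivial imprimitive character is $0$). It is also why you must check $G\notin\mathfrak{P}$ via $p\nmid s'$, and then glue over all $\mathfrak{P}\mid p$ using $p\nmid\varphi(s)$. In effect you re-prove the needed case of Lemma \ref{KeyLemma} one character at a time. This gives a conceptual explanation of the congruence as a Gauss-sum identity, but it is heavier than the statement requires.

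\textbf{Two minor corrections.}
\begin{enumerate}
\item $q\nmid n$ follows at once from Corollary \ref{CoprimeWithGcd}, since $q\mid p-1\mid\varphi(n)$. You do not need Theorem \ref{AlmostPrimeIsCarmichael} and Korselt.
\item The identity you want is $G\,\overline{G}=s'$ for the complex conjugate, or equivalently $G(\chi)G(\bar\chi)=\chi(-1)s'$ for the Gauss sums of $\chi$ and $\bar\chi$. Either way the product is a unit modulo $\mathfrak{P}$, so nothing breaks.
\end{enumerate}
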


Several useful corollaries follow from Theorem \ref{MainTheoremSingle}.

\begin{corollary}\label{FermatPrimeRefined}
    If $\nu_2(\ell)<\nu_2(\varphi(s))$, then $\omega_{g^\ell}=0$. In particular, if $s=q$ is a Fermat prime and $q|p_0-1$ for some prime divisor $p_0$ of $n$, then $p\equiv 1(\text{mod}\ q)$ for every prime divisor $p$ of $n$. 
\end{corollary}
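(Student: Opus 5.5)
The plan is to evaluate the congruence of Theorem \ref{MainTheoremSingle} at a well-chosen root of unity $\eta$ that kills the factor $(1+\eta^\ell)$. Write $m=\varphi(s)$ and $k=\nu_2(m)$, and suppose $j=\nu_2(\ell)<k$, say $\ell=2^j u$ with $u$ odd. Since $j+1\le k$, the number $m/2^{j+1}$ is an integer. Hence $\eta=\zeta^{m/2^{j+1}}$ is a primitive $2^{j+1}$-th root of unity that is also an $m$-th root of unity. Then $\eta^{2^j}=-1$, so $\eta^\ell=(-1)^u=-1$ and $1+\eta^\ell=0$.

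Now assume, for contradiction, that $\omega_{g^\ell}>0$. Then the factor $(1+\eta^\ell)^{\omega_{g^\ell}}$ vanishes, so $f(\eta)=0$. Theorem \ref{MainTheoremSingle} then gives $d(n)\equiv 0 \pmod{p\mathcal{O}_K}$. Since $n$ is square-free, $d(n)=2^{\omega(n)}$ is a rational integer. Because $p\mathcal{O}_K\cap\mathbb{Z}=p\mathbb{Z}$ (e.g.\ $\mathcal{O}_K$ is a free $\mathbb{Z}$-module with basis containing $1$), we get $p\mid 2^{\omega(n)}$. However, $s=q^\nu\ge 3$ divides $p-1$, so $p$ is odd, which is a contradiction. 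Hence $\omega_{g^\ell}=0$. This is the main step; the only care needed is the descent from $\mathcal{O}_K$ to $\mathbb{Z}$ and the choice of $\eta$.

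For the second claim, apply the first part with $p=p_0$ and $s=q$ a Fermat prime, so $m=q-1=2^k$. For $1\le\ell\le q-1$, we have $\nu_2(\ell)<k$ unless $\ell=q-1$, i.e.\ unless $g^\ell=1$. Hence every prime divisor of $n$ that is coprime to $q$ is congruent to $1$ modulo $q$.

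It remains to rule out $q\mid n$. If $n$ is prime, then $n=p_0\equiv 1\pmod q$ and there is nothing to prove. Otherwise, by Theorem \ref{AlmostPrimeIsCarmichael}, $n$ is a Carmichael number, so Korselt's criterion gives $p_0-1\mid n-1$. Therefore $q\mid n-1$, so $q\nmid n$. Consequently every prime divisor $p$ of $n$ satisfies $p\equiv 1\pmod q$.
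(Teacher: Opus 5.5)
Your proof is correct and follows the paper's argument: choose a primitive $2^{\nu_2(\ell)+1}$-th root of unity $\eta$ so that $1+\eta^\ell=0$, and then Theorem \ref{MainTheoremSingle} would force $p\mid 2^{\omega(n)}$ with $p$ odd. Your treatment of the ``in particular'' clause, which the paper leaves implicit, is also valid, though you can exclude $q\mid n$ more quickly with Corollary \ref{CoprimeWithGcd}, since $q\mid p_0-1\mid\varphi(n)$.
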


\begin{remark}
Corollary \ref{FermatPrimeRefined} refines the second assertion of Theorem \ref{powerOf_two_and_Fermat}.
\end{remark}

\begin{corollary}\label{PowerOfTwoCongruence}
With the notation of Theorem \ref{MainTheoremSingle}, define $$A(t):=\sum_{\ell=1}^{\varphi(s)}\left(t-\gcd(t,\ell)\right)\omega_{g^\ell}$$ and $$B:=\varphi(\varphi(s))(\omega(n)-\omega_1).$$ 
The following congruences are valid     
    \begin{enumerate}
        \item  $2^{A(t)}\equiv 1(\text{mod}\ p)$ for each divisor $t$ of $\varphi(s)$
        \item $2^B\equiv 1(\text{mod}\ p)$ 
   \end{enumerate}
\end{corollary}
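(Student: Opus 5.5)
The plan is to evaluate the congruence of Theorem \ref{MainTheoremSingle} at roots of unity and multiply over suitable families of them, so that every factor $1+\eta^\ell$ collapses to a power of $2$. Write $N=\varphi(s)$.

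\textbf{Preliminary facts.} First I record four facts.
\begin{enumerate}
\item $p$ is odd, since $q\mid p-1$ with $q$ odd. Because $n$ is square-free, $d(n)=2^{\omega(n)}$ is a unit modulo $p$.
\item Every prime divisor $p'$ of $n$ is coprime to $s$. Otherwise $q\mid n$. By Theorem \ref{AlmostPrimeIsCarmichael}, $n$ is a Carmichael number (if $n$ is prime, then $n=p$ and $q\nmid p$ anyway), and Korselt's criterion gives $p-1\mid n-1$. Then $q\mid n-1$, which is a contradiction. Hence $\sum_{\ell=1}^{N}\omega_{g^\ell}=\omega(n)$.
\item By Corollary \ref{FermatPrimeRefined}, $\omega_{g^\ell}\neq 0$ forces $\nu_2(\ell)\ge \nu_2(N)\ge\nu_2(t)$ for every $t\mid N$. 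Consequently $t/\gcd(t,\ell)$ is odd whenever $\omega_{g^\ell}\ne0$.
\item Since $p\mathcal{O}_K\cap\mathbb{Z}=p\mathbb{Z}$, a congruence between rational integers modulo $p\mathcal{O}_K$ is a congruence modulo $p$.
\end{enumerate}

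\textbf{Part 1.} Fix $t\mid N$ and multiply the congruences $f(\eta)\equiv d(n)$ over all $t$-th roots of unity $\eta$; these are in particular $N$-th roots of unity. The right-hand side becomes $2^{t\,\omega(n)}$.

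On the left, fix $\ell$ and put $g_\ell=\gcd(t,\ell)$ and $m=t/g_\ell$. As $\eta$ runs over the $t$-th roots of unity, $\eta^\ell$ runs over the $m$-th roots of unity, each exactly $g_\ell$ times. From $\prod_{\zeta^m=1}(x-\zeta)=x^m-1$ evaluated at $x=-1$ we get
\[
\prod_{\zeta^m=1}(1+\zeta)=1-(-1)^m .
\]
This equals $2$ when $m$ is odd. By fact 3, $m$ is odd for every $\ell$ that actually contributes, so
\[
\prod_{\eta^t=1} f(\eta)=2^{\sum_\ell \gcd(t,\ell)\,\omega_{g^\ell}} .
\]
Using fact 2 to write $t\,\omega(n)=\sum_\ell t\,\omega_{g^\ell}$, and cancelling the unit $2$, gives $2^{A(t)}\equiv1 \pmod{p}$. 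Fact 4 converts the congruence from $p\mathcal{O}_K$ to $p$.

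\textbf{Part 2.} I would obtain $B$ as the Möbius combination $\sum_{t\mid N}\mu(N/t)A(t)$, and then conclude from Part 1, since $2^B$ is then a product of integer powers of the $2^{A(t)}$ and $2$ is invertible modulo $p$.

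For the $t$-terms, $\sum_{t\mid N}\mu(N/t)\,t=\varphi(N)$. For the $\gcd$-terms, the map $t\mapsto\gcd(t,\ell)$ is multiplicative on divisors of $N$, so $\sum_{t\mid N}\mu(N/t)\gcd(t,\ell)$ factors over prime powers $r^a\,\|\, N$. Each local factor is $\gcd(r^a,\ell)-\gcd(r^{a-1},\ell)$. This is $0$ unless $r^a\mid\ell$.

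For $\ell=N$ every local factor is nonzero and the total is $\varphi(N)$. For $1\le \ell<N$, some prime power $r^a\,\|\,N$ fails to divide $\ell$, so the total is $0$.

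Hence
\[
\sum_{t\mid N}\mu(N/t)A(t)=\varphi(N)\sum_{\ell<N}\omega_{g^\ell}=\varphi(\varphi(s))\,(\omega(n)-\omega_1)=B,
\]
because $g^N=1$ and, by fact 2, $\sum_{\ell<N}\omega_{g^\ell}=\omega(n)-\omega_1$.

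\textbf{Main obstacle.} The delicate point is Part 1: the factors $1+\zeta$ with $\zeta$ of even order would make the product vanish. This is exactly why the parity input from Corollary \ref{FermatPrimeRefined} is needed. The other subtle point is fact 2, that no prime factor of $n$ is divisible by $q$, which relies on the Carmichael property from Theorem \ref{AlmostPrimeIsCarmichael}.
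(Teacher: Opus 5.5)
Your proof is correct. Part 1 is essentially the paper's argument. The paper also multiplies the congruence of Theorem~\ref{MainTheoremSingle} over all $t$-th roots of unity. It writes each factor as the resultant $\mathrm{Res}(x^t-1,x^\ell+1)=2^{\gcd(t,\ell)}$, where you evaluate $\prod_{\zeta^m=1}(1+\zeta)=2$ directly. It uses Corollary~\ref{FermatPrimeRefined} in the same way to discard the classes with $t/\gcd(t,\ell)$ even. For $\gcd(n,s)=1$ the paper just cites Corollary~\ref{CoprimeWithGcd} rather than going through Theorem~\ref{AlmostPrimeIsCarmichael} and Korselt, but your route also works.

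Part 2 is where you take a genuinely different path. The paper applies $N_{K/\mathbb{Q}}$ to the single congruence $f(\zeta)\equiv d(n)$ at a primitive $\varphi(s)$-th root of unity $\zeta$. It then compares $N_{K/\mathbb{Q}}(1+\zeta_m)=\Phi_m(-1)^{\varphi(\varphi(s))/\varphi(m)}=1$ for odd $m>1$ with $N_{K/\mathbb{Q}}(2)=2^{\varphi(\varphi(s))}$, so only the class $\ell=\varphi(s)$ survives. You instead prove the exponent identity $B=\sum_{t\mid N}\mu(N/t)A(t)$ (with your $N=\varphi(s)$) from the multiplicativity of $t\mapsto\gcd(t,\ell)$, and deduce (2) formally from (1).

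The two are the same computation seen from two sides. The conjugates of $\zeta$ are exactly the primitive $N$-th roots of unity, so $N_{K/\mathbb{Q}}(f(\zeta))=\prod_{t\mid N}\bigl(\prod_{\eta^t=1}f(\eta)\bigr)^{\mu(N/t)}$, all inner products being nonzero powers of $2$. In other words, the norm performs multiplicatively the M\"obius inversion you perform on exponents.

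Your version makes explicit that part 2 carries no information beyond the family of congruences in part 1, and it needs no cyclotomic norm evaluations. The paper's version is shorter once those standard norm facts are granted, and it stays inside the algebraic-number-theoretic framework used throughout.
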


\begin{corollary}\label{OmegaOneSingle}
 Let $n$ be a T-Fermat integer and let $p=1+2^{h}q^{\nu}$ be a prime divisor of $n$, where $h\leq 5, \nu\geq 1$, and $q$ is an odd prime. Then  $$\omega(n)\equiv \omega_1(\text{mod}\ q).$$  In particular, either $\omega(n)>q$ or every divisor $d$ of $n$ satisfies $d\equiv 1(\text{mod}\ q)$.
\end{corollary}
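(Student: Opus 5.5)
The plan is to apply Corollary \ref{PowerOfTwoCongruence}(2) with the smallest admissible prime power, namely $s=q$. This is legitimate because $q\mid q^{\nu}\mid p-1$, so Theorem \ref{MainTheoremSingle} and its corollaries hold for this $s$. Here I read $\omega_1$ as the number of prime divisors of $n$ congruent to $1$ modulo $q$, which is the meaning needed for the ``in particular'' clause. Choosing $s=q$ rather than $s=q^{\nu}$ is deliberate. With $s=q$ we get $\varphi(\varphi(s))=\varphi(q-1)$, and every prime factor of $q-1$ is smaller than $q$, so $\gcd(\varphi(q-1),q)=1$. By contrast, $\varphi(\varphi(q^{\nu}))$ is divisible by $q^{\nu-2}$, which would weaken the conclusion. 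Corollary \ref{PowerOfTwoCongruence}(2) then gives
$$2^{\varphi(q-1)(\omega(n)-\omega_1)}\equiv 1 \pmod p,$$
so the multiplicative order $o=\mathrm{ord}_p(2)$ divides $\varphi(q-1)(\omega(n)-\omega_1)$.

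The key step is to show that $q\mid o$; this is where the hypothesis $h\le 5$ is used. Since $o\mid p-1=2^h q^{\nu}$, if $q\nmid o$ then $o\mid 2^h\mid 32$. Then $p\mid 2^{32}-1=F_0F_1F_2F_3F_4$, and these Fermat numbers $3,5,17,257,65537$ are all prime. Hence $p$ would be a Fermat prime, so $p-1$ would be a power of $2$. This contradicts $q\mid p-1$ with $q$ odd. Therefore $q\mid o$. Combined with $\gcd(q,\varphi(q-1))=1$, this yields $q\mid \omega(n)-\omega_1$, which is the claimed congruence $\omega(n)\equiv\omega_1 \pmod q$.

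For the ``in particular'' clause, suppose $\omega(n)\le q$. Then $0\le \omega(n)-\omega_1\le q$, and this difference is a multiple of $q$, so it equals $0$ or $q$. It cannot equal $q$, since that would force $\omega_1=0$. But $p$ itself satisfies $p\equiv 1\pmod q$, so $\omega_1\ge 1$. Hence $\omega_1=\omega(n)$, i.e. every prime divisor of $n$ is $\equiv 1\pmod q$. Since $n$ is squarefree, every divisor $d$ of $n$ is a product of such primes and therefore satisfies $d\equiv 1\pmod q$.

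The only genuinely delicate point is making sure $q$ divides $\mathrm{ord}_p(2)$, and it rests entirely on the primality of $F_0,\dots,F_4$. This is precisely why the bound $h\le 5$ appears: $F_5$ is composite, so the argument breaks down for $h\ge 6$.
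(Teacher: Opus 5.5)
Your proposal is correct and follows essentially the same route as the paper: take $s=q$, use the norm congruence $2^{\varphi(q-1)(\omega(n)-\omega_1)}\equiv 1 \pmod p$, force $q\mid \mathrm{ord}_p(2)$ because $2^{32}-1$ is the product of the five Fermat primes, and then cancel using $\gcd(q,\varphi(q-1))=1$. Your treatment of the ``in particular'' clause is slightly more explicit than the paper's: you rule out $\omega(n)-\omega_1=q$ by noting that $p$ itself is counted in $\omega_1$, and you use squarefreeness to pass from prime divisors to all divisors.
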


Observe that Theorem \ref{MainTheoremSingle} and Corollaries \ref{FermatPrimeRefined}, \ref{PowerOfTwoCongruence}, and \ref{OmegaOneSingle} are dealing with divisors of the form $s=q^\nu$ only. Trying to extend these results to the case of arbitrary divisors, we came up with the following ``multidimensional'' generalizations.

\begin{theorem}\label{MainTheoremMultiple}
    Let $n$ be a T-Fermat integer, let $p$ be a prime divisor of $n$, and let $s=s_1s_2...s_r$ be a divisor of $p-1$, where $s_i=q^{\nu_i}_i$ for distinct odd primes $q_i$. For each $i$, let $g_i$ be a generator of $(\mathbb{Z}/s_i\mathbb{Z})^{*}$. For $L=(\ell_1,\ell_2,...\ell_r)$, let $\omega_L$ denote the number of prime divisors of $n$ congruent to $g^{\ell_j}_j$ modulo $s_j$ for every $1\leq j\leq r$. Let $K=\mathbb{Q}(\zeta),$ where $\zeta$ is a primitive $\varphi(s)$-th root of unity and let $\mathcal{O}_K=\mathbb{Z}[\zeta]$ be its ring of integers. Then the congruence $f(\eta_1,\eta_2,...,\eta_r)\equiv d(n)(\text{mod}\ p\mathcal{O}_K)$ holds for any $\varphi(s_i)$-th roots of unity $\eta_i, 1\leq i\leq r$, where 
$$f(x_1,x_2,...,x_r)=\displaystyle\prod_{\ell_1=1}^{\varphi(s_1)}\cdots \prod_{\ell_r=1}^{\varphi(s_r)}\left(1+\prod_{i=1}^r x^{\ell_i}_i\right)^{\omega_{L}}.$$
\end{theorem}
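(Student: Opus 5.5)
The plan is to run the argument behind Theorem~\ref{MainTheoremSingle} again, replacing the cyclic group $(\mathbb{Z}/s\mathbb{Z})^*$ by the product $\prod_i(\mathbb{Z}/s_i\mathbb{Z})^*$ via the Chinese remainder theorem. I am relying on the exact shape of the single-prime-power proof, which I have not seen, and the key step below is not yet worked out.

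\textbf{Factorisation.} Since $n$ is square-free, $\sum_{d|n}x^d=\prod_{p'|n}(1+x^{p'})$, where $p'$ runs over the prime divisors of $n$. So the hypothesis $n|T_n(x)$, read modulo $p$, says
\[
\prod_{p'|n}(1+x^{p'})\equiv d(n)\,x \pmod p \quad\text{for all } x\in\mathbb{F}_p .
\]
Equivalently, the polynomial $\prod_{p'}(1+x^{p'})-d(n)x$ vanishes in $\mathbb{F}_p[x]/(x^p-x)$.

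\textbf{Reduction to $\mathbb{F}_p[C_s]$.} Since $s|p-1$, I restrict $x$ to the subgroup $\mu_s\subset\mathbb{F}_p^*$. The identity then holds in $\mathbb{F}_p[x]/(x^s-1)\cong\mathbb{F}_p[C_s]$. It also holds after replacing $x$ by $x^k$ for every $k$ coprime to $s$.

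On $\mu_s$, the exponent $p'$ acts only through its residue modulo $s$. By CRT, $(\mathbb{Z}/s\mathbb{Z})^*\cong\prod_j(\mathbb{Z}/s_j\mathbb{Z})^*$, and each factor is cyclic with generator $g_j$ (here we use that each $q_j$ is odd). So every prime $p'|n$ with $\gcd(p',s)=1$ has a unique exponent vector $L=(\ell_1,\dots,\ell_r)$ with $p'\equiv g_j^{\ell_j}\pmod{s_j}$ for all $j$, and the number of such primes is $\omega_L$.

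\textbf{Key step: passing to the torus.} I would then transport the relation from the multiplicative indexing by $(\mathbb{Z}/s\mathbb{Z})^*$ to the additive indexing by $L\in\prod_j\mathbb{Z}/\varphi(s_j)$. Under this transport, the factor $1+x^{p'}$ should become $1+\prod_i x_i^{\ell_i}$, and the right-hand side $d(n)x$ should become the constant $d(n)$ once normalised.

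In the one-variable case this is presumably done with a prime $\mathfrak{P}$ of $\mathcal{O}_K$ above $p$ and a character or resolvent argument. The character sends $g^{\ell}\mapsto\eta^{\ell}$ for a $\varphi(s)$-th root of unity $\eta$. Here I would use the product character
\[
\chi(g_1^{\ell_1},\dots,g_r^{\ell_r})=\prod_i\eta_i^{\ell_i},
\]
with $\eta_i^{\varphi(s_i)}=1$. Each such $\eta_i$ is a $\varphi(s)$-th root of unity because $\varphi(s_i)\,|\,\varphi(s)$, so every $\eta_i$ lies in $K=\mathbb{Q}(\zeta)$ with $\zeta$ a primitive $\varphi(s)$-th root of unity. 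Evaluating the transported identity at $(x_1,\dots,x_r)=(\eta_1,\dots,\eta_r)$ would give $f(\eta_1,\dots,\eta_r)\equiv d(n)\pmod{\mathfrak{P}}$. Since $f(\eta)-d(n)$ lies in $\mathcal{O}_K$ and the statement is stable under all Galois conjugates, this should upgrade to a congruence modulo $p\mathcal{O}_K$. That upgrade uses $p$ being unramified in $K$ when $p\nmid\varphi(s)$, which holds because $\varphi(s)<p$.

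\textbf{Main obstacle.} The hardest step is the transfer from the exponent $x^{p'}$, which depends on $p'$ multiplicatively through $g_j^{\ell_j}$, to the polynomial expression $\prod_i x_i^{\ell_i}$, which depends on $\ell_i$ additively. First I would try to copy verbatim whatever device proves Theorem~\ref{MainTheoremSingle}, since every ingredient there factors across CRT components. I would also check separately two points:
\begin{enumerate}
\item the primes $p'$ with some $\ell_j\equiv 0$, including $p$ itself with $L=0$, are counted correctly; I expect the $\ell_j$ to range over $1,\dots,\varphi(s_j)$ so that $\ell_j=\varphi(s_j)$ plays the role of residue $1$;
\item the factor $x$ on the right-hand side is normalised away, exactly as in the single case.
\end{enumerate}
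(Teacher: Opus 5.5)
\textbf{There is a genuine gap.} Your starting identity is false, and the step you label the main obstacle is in fact the whole content of the proof, which you leave open.

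\textbf{The factorisation is wrong.} For square-free $n$, expanding $\prod_{p'\mid n}(1+x^{p'})$ \emph{adds} exponents: it gives $\sum_{S}x^{\sum_{p'\in S}p'}$ over sets $S$ of prime divisors of $n$. The divisors of $n$, by contrast, are \emph{products} of such sets. For $n=15$ you get $1+x^3+x^5+x^8$ instead of $x+x^3+x^5+x^{15}$. Your congruence $\prod_{p'\mid n}(1+x^{p'})\equiv d(n)x$ already fails at $x=0$, where it reads $1\equiv 0\pmod p$. So the relation you place in $\mathbb{F}_p[x]/(x^s-1)$ is unfounded.

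\textbf{The ring is the wrong one.} The multiplication in $\mathbb{F}_p[x]/(x^s-1)\cong\mathbb{F}_p[C_s]$ is that of the \emph{additive} group $\mathbb{Z}/s\mathbb{Z}$. But $f$ is built from characters of the \emph{multiplicative} group $(\mathbb{Z}/s\mathbb{Z})^*$, of order $\varphi(s)$. No evaluation inside $\mathbb{F}_p[C_s]$ turns $x^{p'}$ into $\prod_i x_i^{\ell_i}$, which is why your transport step cannot be completed there.

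\textbf{The missing idea.} Use the hypothesis only to extract coefficient information, then switch to the right ring.
\begin{enumerate}
\item Restrict the correct identity $\sum_{d\mid n}x^d=d(n)x$ to $x\in\mu_s\subset\mathbb{F}_p^*$. Let $c_r$ be the number of divisors $d\equiv r\pmod s$. The polynomial $\sum_{r=0}^{s-1}c_rx^r-d(n)x$ has degree $<s$ and vanishes at the $s$ distinct points of $\mu_s$. Hence $c_r\equiv 0\pmod p$ for $r\neq 1$ and $c_1\equiv d(n)\pmod p$. This is Lemma~\ref{KeyLemma} with $m=s$.
\item Now view $\sum_r c_r[r]$ in the group ring $\mathbb{Z}[G]$, where $G=(\mathbb{Z}/s\mathbb{Z})^*$. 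This requires every prime of $n$ to be a unit modulo $s$. That is Corollary~\ref{CoprimeWithGcd}, and you also left it unjustified.
\item Since $n$ is square-free, $\sum_r c_r[r]=\prod_{p'\mid n}([1]+[p'])$ in $\mathbb{Z}[G]$. Therefore $\prod_{p'\mid n}([1]+[p'])\equiv d(n)[1]\pmod{p\mathbb{Z}[G]}$.
\item Your CRT character $\chi$ sends $a\in G$ with $a\equiv g_j^{\ell_j}\pmod{s_j}$ for all $j$ to $\prod_i\eta_i^{\ell_i}$. It extends to a ring homomorphism $\mathbb{Z}[G]\to\mathbb{Z}[\zeta]$. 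Applying it gives $f(\eta_1,\dots,\eta_r)\equiv d(n)\pmod{p\mathcal{O}_K}$ at once.
\end{enumerate}
No prime $\mathfrak{P}$ is needed, and neither is your Galois/unramifiedness upgrade (that upgrade is correct but superfluous).

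The paper carries out exactly this computation in matrix form. It uses $\delta_{ah}=(I+P_a)\delta_h$, so that $\delta_n=\prod_L\bigl(I+\bigotimes_i P_{g_i}^{\ell_i}\bigr)^{\omega_L}e_1\equiv d(n)e_1\pmod p$. It then diagonalises the $P_{g_i}$ simultaneously through $Q=\bigotimes_i Q_i$.
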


\begin{corollary}\label{OmegaOneMultiple}
With the notation of Theorem \ref{MainTheoremMultiple}, define $$A(t_1,...,t_r):=\displaystyle\sum_{\ell_1=1}^{\varphi(s_1)}\cdots \sum_{\ell_r=1}^{\varphi(s_r)}\left(\prod_{i=1}^r t_i-\prod_{i=1}^r \gcd(t_i,\ell_i)\right)\omega_L$$
and let $t_i|\varphi(s_i)$ for each $1\leq i\leq r$. Then
$$2^{A(t_1,...,t_r)}\equiv 1(\text{mod}\ p).$$
\end{corollary}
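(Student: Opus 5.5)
The idea is to deduce Corollary \ref{OmegaOneMultiple} from Theorem \ref{MainTheoremMultiple} by multiplying the congruence $f(\eta_1,\dots,\eta_r)\equiv d(n)$ over a suitable family of roots of unity, chosen so that the product collapses to an integer. Fix $t_i\mid\varphi(s_i)$ and let $\mu_{t_i}$ be the group of $t_i$-th roots of unity; it lies inside the $\varphi(s_i)$-th roots of unity, and all of them sit in $K$ because $\varphi(s_i)\mid\varphi(s)$. Theorem \ref{MainTheoremMultiple} applies to every tuple $(\eta_1,\dots,\eta_r)\in\mu_{t_1}\times\cdots\times\mu_{t_r}$. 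Multiplying these $T=\prod t_i$ congruences gives
$$\prod_{\eta_i\in\mu_{t_i}}f(\eta_1,\dots,\eta_r)\equiv d(n)^{T}\pmod{p\mathcal{O}_K}.$$

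Next I evaluate the left side factor by factor. For fixed $L$, write $\eta_i^{\ell_i}=\xi_i$. As $\eta_i$ runs over $\mu_{t_i}$, the power $\eta_i^{\ell_i}$ runs over $\mu_{t_i/g_i}$, hitting each value $g_i$ times, where $g_i=\gcd(t_i,\ell_i)$. So the product $\prod_i\eta_i^{\ell_i}$ runs over the group $\mu_m$ with $m=\operatorname{lcm}_i(t_i/g_i)$. Because the $t_i$ divide $\varphi(s_i)$, which are pairwise distinct-prime-power related only up to factors of $2$, I will not assume coprimality here; I only use that the multiset of values is uniform over $\mu_m$, each value occurring $T/m$ times, since it is the image of a group homomorphism. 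The identity $\prod_{\xi\in\mu_m}(1+\xi)=1-(-1)^m$ then gives the factor contributed by $L$:
$$\bigl(1-(-1)^m\bigr)^{\omega_L T/m}.$$
This vanishes when $m$ is even and equals $2^{\omega_LT/m}$ when $m$ is odd.

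The main obstacle is reconciling this with the stated exponent $T-\prod g_i$. The two agree when $m=\prod_i(t_i/g_i)$, i.e. when the map $\prod\mu_{t_i/g_i}\to\mu$ is injective, which holds when the $t_i/g_i$ are pairwise coprime. I would handle this in one of two ways. The first option is to replace the product above by a product with $\eta_i$ ranging over a generic-independent family in the multivariate sense, evaluating $\prod_{\eta}\bigl(1+\prod_i\eta_i^{\ell_i}\bigr)$ as a resultant. The second option is to note that the case of interest forces every relevant $m$ to be odd. To see this, apply Theorem \ref{powerOf_two_and_Fermat} and Corollary \ref{FermatPrimeRefined}-type arguments coordinatewise: these show that the $\omega_L$ with an even exponent vanish, which is the multivariate analogue of $\omega_{g^\ell}=0$ for $\nu_2(\ell)<\nu_2(\varphi(s))$. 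The whole product is then nonzero and equal to $2^{\sum_L \omega_L(\text{exponent})}$.

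Finally I compare the two sides. By the same computation at the trivial tuple, $f(1,\dots,1)=2^{\omega(n)}=d(n)$ with $\sum_L\omega_L=\omega(n)$, so $d(n)^T=2^{T\omega(n)}$. Both sides are then powers of $2$. Since $p$ is odd, $2$ is invertible modulo $p\mathcal{O}_K$, so I can cancel and obtain $2^{A(t_1,\dots,t_r)}\equiv1\pmod{p\mathcal{O}_K}$. Since $p\mathcal{O}_K\cap\mathbb{Z}=p\mathbb{Z}$, this gives the claimed congruence modulo $p$. If $A$ turns out to be negative in some case, I interpret the congruence in $(\mathbb{Z}/p)^*$, where it still makes sense.
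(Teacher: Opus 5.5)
The gap is the one you flagged yourself as the ``main obstacle'', and your proposal never closes it. Your last paragraph simply treats the exponent as if it were $\prod_i\gcd(t_i,\ell_i)$. Write $T=\prod_i t_i$ and $M_L=\mathrm{lcm}_i\bigl(t_i/\gcd(t_i,\ell_i)\bigr)$. Your evaluation of the $L$-factor, $2^{T/M_L}$, is correct. The statement, however, needs $2^{\prod_i\gcd(t_i,\ell_i)}=2^{T/\prod_i(t_i/\gcd(t_i,\ell_i))}$.

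Neither option fixes this:
\begin{itemize}
\item Option~2 is correct as far as it goes. Applying Corollary~\ref{FermatPrimeRefined} to each $s_j$ shows that $\omega_L\neq0$ forces every $t_j/\gcd(t_j,\ell_j)$ to be odd, so $M_L$ is odd and the product does not vanish. But the exponent is still $T/M_L$.
\item Option~1 is not an argument, and the example below shows that no choice of family can succeed.
\end{itemize}
What your method actually proves is
\[
2^{\sum_L\left(T-T/M_L\right)\omega_L}\equiv 1\pmod p .
\]
Its exponent coincides with $A(t_1,\dots,t_r)$ exactly when $M_L=\prod_i t_i/\gcd(t_i,\ell_i)$ for every $L$ with $\omega_L>0$, for example when the $t_i$ are pairwise coprime.

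The paper's proof has the same hole, hidden in the step $\prod_{X(\mu_1,\dots,\mu_r)}\bigl(1+\prod_j\eta_j\bigr)=\prod_{\eta^M=1}(1+\eta)$. This holds only if $\prod_j\mu_j=M$; in general the left side is $2^{\prod_j\mu_j/M}$. Take $s_1=7$, $s_2=13$, $t_1=t_2=3$, $L=(2,4)$, a class not excluded by Corollary~\ref{FermatPrimeRefined}. Then $\mu_1=\mu_2=3$, and $\prod_{\eta_1^3=\eta_2^3=1}(1+\eta_1\eta_2)=2^3$, not $2$.

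In fact the stated exponent cannot be derived from Theorem~\ref{MainTheoremMultiple} together with Corollary~\ref{FermatPrimeRefined} at all. Take $s_1=7$, $s_2=13$, $p=547\equiv1\pmod{91}$. Set $\omega_L=91$ for each of the eight classes $L\in\{2,4,6\}\times\{4,8,12\}$ other than $(6,12)$, with $\omega_{(6,12)}\ge1$ arbitrary. This is only admissible data, not an actual T-Fermat integer.
\begin{itemize}
\item For every pair $(\eta_1,\eta_2)$, $f(\eta_1,\eta_2)\in\{2^{\omega(n)},\,2^{\omega(n)-6\cdot 91}\}$, using $(1+\rho)(1+\rho^2)=1$ for a primitive cube root of unity $\rho$.
\item So all congruences of the theorem reduce to $2^{546}\equiv1\pmod{547}$, which holds.
\item Yet $A(3,3)=56\cdot 91\equiv 182\pmod{546}$, and $2^{182}\equiv 40\pmod{547}$.
\end{itemize}

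So your suspicion was right. The lcm version displayed above is what this route yields. The corollary as stated needs either that corrected exponent or a coprimality hypothesis on the $t_i$.
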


\begin{corollary}\label{PowerOfTwoCongruenceMultiple}
In the notation of Theorem \ref{MainTheoremMultiple} assume $r=2$. Let $d=\gcd(\varphi(s_1),\varphi(s_2))$ and let $x_0$ be determined modulo $s=s_1s_2$ by the congruences $x_0\equiv g^{\frac{\varphi(s_1)}{d}}_1(\text{mod}\ s_1)$ and $x_0\equiv g^{-\frac{\varphi(s_2)}{d}}_2(\text{mod}\ s_2)$. We also define 
\begin{equation*}
B:=\varphi(\varphi(s))\left(\omega(n)-\sum_{k=0}^{d-1}\omega_{x^k_0}\right).
\end{equation*}
Then 
$$2^{B}\equiv 1(\text{mod}\ p).$$ 
\end{corollary}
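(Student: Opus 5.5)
The plan is to specialise Theorem~\ref{MainTheoremMultiple} (with $r=2$) to a pair of roots of unity $(\eta_1,\eta_2)$ chosen so that the monomial $\eta_1^{\ell_1}\eta_2^{\ell_2}$ equals $1$ exactly for the residue classes $x_0^k$. Then take the norm from $K$ down to $\mathbb{Q}$, mimicking the proof of the second part of Corollary~\ref{PowerOfTwoCongruence}.

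\textbf{Choice of roots.} Let $m=\operatorname{lcm}(\varphi(s_1),\varphi(s_2))=\varphi(s_1)\varphi(s_2)/d$, which divides $\varphi(s)=\varphi(s_1)\varphi(s_2)$. Hence a primitive $m$-th root of unity $\xi$ lies in $K$. Put $\eta_1=\xi^{m/\varphi(s_1)}$ and $\eta_2=\xi^{m/\varphi(s_2)}$; these are primitive $\varphi(s_1)$-th and $\varphi(s_2)$-th roots of unity. Consider the homomorphism $(\ell_1,\ell_2)\mapsto \eta_1^{\ell_1}\eta_2^{\ell_2}$ from $\mathbb{Z}/\varphi(s_1)\times\mathbb{Z}/\varphi(s_2)$ onto $\mu_m$. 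Its kernel has order $\varphi(s_1)\varphi(s_2)/m=d$. It contains $\bigl(\varphi(s_1)/d,\,-\varphi(s_2)/d\bigr)$, since $\eta_1^{\varphi(s_1)/d}=\xi^{m/d}=\eta_2^{\varphi(s_2)/d}$, and this element has order exactly $d$. So the kernel is precisely the set of exponent pairs of the classes $x_0^k$ with $0\le k\le d-1$, by the CRT definition of $x_0$.

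\textbf{Splitting $f$.} Consequently
\[
f(\eta_1,\eta_2)=2^{\sum_{k=0}^{d-1}\omega_{x_0^k}}\prod_{L\notin\ker}\bigl(1+\xi^{j(L)}\bigr)^{\omega_L},
\]
where $\xi^{j(L)}\neq1$. Theorem~\ref{MainTheoremMultiple} gives $f(\eta_1,\eta_2)\equiv 2^{\omega(n)}\pmod{p\mathcal{O}_K}$, using $d(n)=2^{\omega(n)}$ since $n$ is square-free. If some $\xi^{j(L)}=-1$ with $\omega_L>0$, the left side vanishes, which is impossible because $p$ is odd. So all remaining factors $1+\xi^{j}$ are nonzero.

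Next, the congruence is stable under every automorphism $\sigma\in\mathrm{Gal}(K/\mathbb{Q})$, because $p\mathcal{O}_K$ is Galois stable. Multiplying the $\varphi(\varphi(s))$ conjugate congruences gives the rational congruence
\[
2^{\varphi(\varphi(s))\,\omega(n)}\equiv 2^{\varphi(\varphi(s))\sum_k\omega_{x_0^k}}\prod_{L\notin\ker}N_{K/\mathbb{Q}}\bigl(1+\xi^{j(L)}\bigr)^{\omega_L}\pmod p .
\]
Since $2$ is invertible mod $p$, this rearranges to $2^{B}\equiv \prod_{L\notin\ker} N_{K/\mathbb{Q}}(1+\xi^{j(L)})^{\omega_L}\pmod p$.

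\textbf{Main obstacle.} The remaining step, which I expect to be the hardest, is to show that the product of norms on the right is $\equiv1\pmod p$ (or is absorbed by the other congruences). Here $N_{K/\mathbb{Q}}(1+\zeta_e)$ is a power of $\Phi_e(-1)$. This equals $1$ for most $e$, but equals $2$ for $e$ a power of $2$ (with $e\ge4$) and equals an odd prime $\ell$ for $e=2\ell^k$.

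I would first handle this as in Corollary~\ref{PowerOfTwoCongruence}. The congruence holds for every admissible pair of roots, in particular for the pairs $(\eta_1^a,\eta_2^b)$ with $a,b$ coprime to the respective orders, and also for non-primitive pairs. So one can form the ratio of the products over two families of choices that share the same multiset of orders $e\neq1$ on the non-kernel classes, so that the $\Phi_e(-1)$ contributions cancel. If exact cancellation fails, the fallback is to combine with Corollary~\ref{OmegaOneMultiple}, which already gives $2^{A(t_1,t_2)}\equiv1\pmod p$. Taking suitable $t_i$ should express the $2$-power norm contributions as powers of $2$ with exponent in the lattice generated by the $A(t_1,t_2)$. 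The odd-prime contributions would be eliminated by the vanishing argument above, applied to the relevant orders, in the spirit of Corollary~\ref{FermatPrimeRefined}.
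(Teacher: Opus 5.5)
\textbf{There is a genuine gap at the final step.} Your setup is sound and agrees with the paper's. With $\xi=\zeta^d$, your roots $\eta_1=\xi^{\varphi(s_2)/d}$ and $\eta_2=\xi^{\varphi(s_1)/d}$ are exactly the paper's $\zeta^{\varphi(s_2)}$ and $\zeta^{\varphi(s_1)}$. Your kernel computation correctly identifies the exponent pairs of the classes $x_0^k$, and taking norms is also the paper's route.

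However, you reduce everything to showing $\prod_{L\notin\ker}N_{K/\mathbb{Q}}(1+\xi^{j(L)})^{\omega_L}\equiv 1\pmod p$, and then leave this open. For the orders $e=2^k$ with $k\ge 2$, where $N_{K/\mathbb{Q}}(1+\zeta_e)$ is a nontrivial power of $2$, you only suggest possible routes. One is a ratio of two families of choices; the other is expressing exponents in the lattice spanned by the $A(t_1,t_2)$. You do not show that either works.

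\textbf{The missing idea.} Every even order is eliminated outright, so no cancellation is needed.

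\emph{Why even orders vanish.} Theorem~\ref{MainTheoremMultiple} holds for arbitrary, not necessarily primitive, $\varphi(s_i)$-th roots of unity. Suppose $\eta_1^{\ell_1}\eta_2^{\ell_2}$ has even order $e$ and evaluate at $(\eta_1^{e/2},\eta_2^{e/2})$. The factor indexed by $L$ becomes $(1+(-1))^{\omega_L}$. So $\omega_L>0$ would force $0\equiv 2^{\omega(n)}\pmod{p\mathcal{O}_K}$, which is impossible since $p$ is odd.

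\emph{The paper's equivalent route.} Apply Corollary~\ref{FermatPrimeRefined} to $s_1$ and to $s_2$ separately. This gives $\omega_L=0$ unless $\nu_2(\ell_i)\ge\nu_2(\varphi(s_i))$ for $i=1,2$. In that case $\eta_1^{\ell_1}$ and $\eta_2^{\ell_2}$ both have odd order, and hence so does their product.

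\emph{Conclusion.} Every surviving non-kernel factor is $1+\zeta_e$ with $e>1$ odd. For such $e$, $N_{K/\mathbb{Q}}(1+\zeta_e)=\Phi_e(-1)^{\varphi(\varphi(s))/\varphi(e)}=1$. So the product of norms equals $1$ exactly, and $2^B\equiv 1\pmod p$ follows at once.

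You applied this vanishing trick to $e=2$ and gestured at it for $e=2\ell^k$. You did not notice that it also disposes of the $2$-power orders, which is precisely the case you left unresolved.
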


Theorem \ref{MainTheoremMultiple} and Corollaries
\ref{OmegaOneMultiple} and \ref{PowerOfTwoCongruenceMultiple} 
impose strong restrictions on the prime divisors $p$ satisfying the condition $\omega(p-1)>2$. Thus, it is natural to expect that the number of such primes in the prime factorization of $n$ is relatively small. This observation motivated us to formulate the following 

\begin{conjecture}\label{SmallPortionOfPrimes}
There exists a constant $0<\epsilon<1$ such that $$\# \left\{p|n\ |\ \omega(p-1)>2\right\}<\epsilon \omega(n)$$ for infinitely many composite T-Fermat integers $n$. 
\end{conjecture}

If Conjecture \ref{SmallPortionOfPrimes} holds, it yields the following significant result. 
\begin{theorem*}
There exist infinitely many primes $p$ such that $p-1$ has at most two prime divisors. 
\end{theorem*}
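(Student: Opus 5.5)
Throughout, $\omega(m)$ denotes the number of distinct prime divisors of $m$. I would argue by contradiction. Suppose only finitely many primes $p$ satisfy $\omega(p-1)\le 2$, and let $S$ be this finite set. Take the infinite family $\mathcal N$ of composite T-Fermat integers supplied by Conjecture \ref{SmallPortionOfPrimes}, with the constant $0<\epsilon<1$ given there. Every $n\in\mathcal N$ is square-free. Its prime divisors therefore split into two groups: the ``good'' ones, with $\omega(p-1)\le 2$, which are distinct elements of $S$; and the ``bad'' ones, with $\omega(p-1)>2$, of which there are fewer than $\epsilon\,\omega(n)$. Hence $(1-\epsilon)\omega(n)<\#\{p\mid n:\ \omega(p-1)\le 2\}\le |S|$, so
$$\omega(n)<\frac{|S|}{1-\epsilon}\quad\text{for all } n\in\mathcal N .$$
If $\omega(n)$ were unbounded on $\mathcal N$, this would already be a contradiction. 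So the real content is the case where $\omega(n)$ stays bounded on $\mathcal N$.

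In that case I would pass to an infinite subfamily on which three things are fixed: $\omega(n)=k$, the set $G\subseteq S$ of good prime divisors, and hence the product $a=\prod_{p\in G}p$. Each such $n$ then has the form $n=ab$, where $b$ is square-free and coprime to $a$, and $b$ has exactly $j=k-|G|<\epsilon k$ prime factors, all bad. By Theorem \ref{AlmostPrimeIsCarmichael}, every such $n$ is a Carmichael number, so Korselt's criterion gives $p-1\mid n-1$ for every $p\mid n$. I would also use Theorem \ref{powerOf_two_and_Fermat}, which says $\nu_2(p-1)$ is the same for all $p\mid n$; since $a$ is fixed, this pins down $\nu_2(p-1)$ for the bad primes as well. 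The aim is a finiteness statement for $n=ab$ with $a$ fixed and $\omega(b)=j$ fixed; that would contradict $\mathcal N$ being infinite.

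For the finiteness I would use the classical Beeger--Duparc argument, which I would run by induction on the free primes. If $P$ is the largest prime factor of $n$, then $n-1=\frac nP(P-1)+\left(\frac nP-1\right)$, so $P-1\mid \frac nP-1$. This bounds $P$ by the product of the remaining primes. When $j\le 2$ (so $n=aP_1P_2$), Korselt's conditions $P_i-1\mid n-1$ for $i=1,2$ give two divisibilities which, with $a$ fixed, leave only finitely many pairs $(P_1,P_2)$.

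The hard part is $j\ge 3$. With $a$ fixed and three or more free primes, finiteness of such Carmichael numbers is not known in general, so Korselt alone will not suffice. At that step I would first try to feed in the much stronger T-Fermat restrictions. By Corollaries \ref{OmegaOneSingle} and \ref{PowerOfTwoCongruenceMultiple}, each good prime $p=1+2^hq^\nu\in G$ forces congruences of the form $\omega(n)\equiv\omega_1 \pmod q$ and $2^B\equiv 1\pmod p$. These tie the residues of the bad primes modulo the fixed moduli $q^\nu$ to the fixed set $G$. I would then try to combine these constraints with Theorem \ref{MainTheoremMultiple} applied at the bad primes themselves, in the hope of bounding them. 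If this does not close the argument, the fallback is to note that the statement only needs \emph{some} good prime to vary, and to try to show directly that $G$ cannot stay constant along an infinite family. I expect this step to be the main obstacle.
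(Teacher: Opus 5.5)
There is a genuine gap. Your opening step is correct and is essentially the paper's: the good prime divisors of each $n$ in the family of Conjecture \ref{SmallPortionOfPrimes} are distinct elements of $S$, so $\omega(n)<|S|/(1-\epsilon)$ uniformly on that family. But you never close the remaining case, where $\omega(n)$ stays bounded on an infinite family of composite T-Fermat integers.

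The Korselt/Beeger--Duparc route only uses that $n$ is a Carmichael number. As you note yourself, it says nothing once $j\ge 3$. The rest of the plan is a hope, not an argument. This includes feeding in Corollaries \ref{OmegaOneSingle} and \ref{PowerOfTwoCongruenceMultiple} and Theorem \ref{MainTheoremMultiple} ``in the hope of bounding'' the bad primes, as well as showing that $G$ cannot stay constant.

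The missing idea is a simple size bound that comes straight from Lemma \ref{KeyLemma}, not from the Carmichael property. Let $p<q$ be the smallest and largest prime divisors of a composite T-Fermat integer $n$.

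\begin{itemize}
\item Since $0<p-1<q-1$, we have $p\not\equiv 1\pmod{q-1}$.
\item By Lemma \ref{KeyLemma} with $m=q-1$, the number of divisors of $n$ congruent to $p$ modulo $q-1$ is divisible by $q$.
\item That count is positive, because it includes $p$. It does not include the divisor $1$. Hence $q<d(n)=2^{\omega(n)}$, which is Corollary \ref{PrimeDivisorBound}.
\item Therefore $n\le q^{\omega(n)}<2^{\omega(n)^2}$. So for any fixed $K$, only finitely many composite T-Fermat integers have $\omega(n)\le K$; this is Corollary \ref{fin_many_with_few_primes}.
\end{itemize}

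Consequently $\omega(n)$ is unbounded on the infinite family from Conjecture \ref{SmallPortionOfPrimes}, which contradicts $\omega(n)<|S|/(1-\epsilon)$. This is exactly the paper's proof, phrased directly: for every $M$, pick $n$ in the family with $\omega(n)>M$; it then has more than $(1-\epsilon)M$ good prime divisors. Everything in your argument after the first paragraph becomes unnecessary: fixing $G$ and $a$, Theorem \ref{powerOf_two_and_Fermat}, Korselt, and Beeger--Duparc.
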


\begin{proof} Let $M$ be a positive integer. By Corollary \ref{fin_many_with_few_primes}, there exists $N$ such that every composite T-Fermat integer $n>N$ satisfies $\omega(n)>M$. Now choose a composite T-Fermat integer $n>N$ satisfying Conjecture \ref{SmallPortionOfPrimes}. Then $$\# \left\{p|n\ |\ \omega(p-1)\leq 2\right\}>(1-\epsilon)\omega(n)>(1-\epsilon)M.$$ Since $M$ is arbitrary, the set of primes $p$ such that $\omega(p-1)\le 2$ is infinite. \end{proof}

The paper is organized as follows. In Section \ref{AuxResults}, we prove several auxiliary results on T-Fermat integers, as well as Theorems \ref{powerOf_two_and_Fermat} and \ref{AlmostPrimeIsCarmichael}. In Section \ref{sec:MainTheoremSingle}, we prove Theorem \ref{MainTheoremSingle} together with Corollaries \ref{FermatPrimeRefined}, \ref{PowerOfTwoCongruence}, and \ref{OmegaOneSingle}. In Section \ref{sec: MainTheoremMultiple}, we prove Theorem \ref{MainTheoremMultiple} and Corollaries \ref{OmegaOneMultiple} and \ref{PowerOfTwoCongruenceMultiple}.
We use the following notation throughout the paper.

\begin{itemize}
\item $\gcd(m,n)$ - the greatest common divisor of $m$ and $n$. 
\item $d(n)$ - the number of positive divisors of $n.$
\item $\omega(n)$ - the number of distinct prime divisors of $n.$
\item $\omega_r=\omega_r(n,s)$ - the number of distinct prime divisors of $n$ that are congruent to $r$ modulo the given integer $s.$ We simply write $\omega_r$ when $n$ and $s$ are clear from the context. 
\item $\varphi(n)$ - Euler's totient function.
\item $\nu_p(n)$ - the exponent of $p$ in the prime factorization of $n.$ 
\item  $A\otimes B$ - Kronecker (tensor) product of matrices $A$ and $B$. For multiple matrices we use                 $\displaystyle\bigotimes_{i=1}^r A_i$.
\end{itemize}
\section{Auxiliary results}\label{AuxResults}
\begin{lemma}\label{KeyLemma}
    Let $n$ be a weakly T-Fermat integer and let $p$ be a prime divisor of $n.$ If $m|p-1$ and $r\not\equiv 1 (\text{mod}\ m),$ then the number of positive divisors of $n$ congruent to $r$ modulo $m$ is divisible by $p.$
\end{lemma}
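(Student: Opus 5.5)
Fix a prime divisor $p$ of $n$ and an integer $m$ with $m\mid p-1$. The plan is to feed suitable elements of $\mathbb{F}_p$ into the identity $T_n(x)\equiv 0 \pmod p$, which holds for all integers $x$ because $p\mid n\mid T_n(x)$. Since $T_n(x)$ has integer coefficients, this says that the polynomial $\sum_{d\mid n}x^d-d(n)x$ vanishes as a function on $\mathbb{F}_p$. For each residue $r$ modulo $m$, let $c_r$ be the number of divisors $d$ of $n$ with $d\equiv r\pmod m$. Because $\mathbb{F}_p^*$ is cyclic of order $p-1$ and $m\mid p-1$, there is an element $\zeta\in\mathbb{F}_p^*$ of exact order $m$, and $\mu_m=\{\zeta^j:0\le j<m\}\subset\mathbb{F}_p$.

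First evaluate $T_n$ at $x=\zeta^j$. Since $\zeta^{jd}$ depends only on $d$ modulo $m$, the identity becomes
$$\sum_{r\in\mathbb{Z}/m\mathbb{Z}} c_r\,\zeta^{jr}-d(n)\,\zeta^j\equiv 0 \pmod p\qquad (0\le j<m).$$
We also have $d(n)=\sum_r c_r$. Define $c'_r=c_r$ for $r\not\equiv 1$ and $c'_1=c_1-d(n)$. Then the displayed relations say exactly that $\sum_r c'_r\zeta^{jr}=0$ in $\mathbb{F}_p$ for all $j=0,\dots,m-1$; in other words, the discrete Fourier transform of the vector $(c'_r)$ vanishes.

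Next, invert this Fourier transform. Multiply the $j$-th relation by $\zeta^{-jr_0}$ and sum over $j$. Using $\sum_{j=0}^{m-1}\zeta^{j(r-r_0)}=m$ if $r\equiv r_0$ and $0$ otherwise (valid because $\zeta$ has exact order $m$), we obtain $m\,c'_{r_0}\equiv 0\pmod p$. Since $m\mid p-1$, the integer $m$ is coprime to $p$ and hence invertible mod $p$, so $c'_{r_0}\equiv 0\pmod p$ for every $r_0$. For $r_0\not\equiv 1\pmod m$ this gives $c_{r_0}\equiv 0\pmod p$, which is the claim. As a byproduct, the case $r_0=1$ gives $c_1\equiv d(n)\pmod p$.

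There is no serious obstacle in this argument. The only points needing care are that primitive $m$-th roots of unity genuinely exist in $\mathbb{F}_p$, which is exactly where the hypothesis $m\mid p-1$ enters, and that $m$ is invertible mod $p$. Note also that square-freeness is never used, which matches the fact that the lemma is stated for weakly T-Fermat integers.
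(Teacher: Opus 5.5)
Your proof is correct, but its key step differs from the paper's.

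\textbf{The paper's route.} The paper works with all of $\mathbb{F}_p$ at once. It reduces the exponents of $T_n$ modulo $p-1$ and writes $T_n$ as $(h_1-d(n))x+\sum_{i\neq 1}h_ix^i$, where $h_i$ counts the divisors of $n$ in the class $i$ modulo $p-1$. A polynomial of this low degree that vanishes on the whole field must be zero, so $p\mid h_i$ for every $i\not\equiv 1\pmod{p-1}$. The statement for a general $m\mid p-1$ then follows by summing the $h_i$ over the classes $i\equiv r\pmod m$.

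\textbf{Your route.} You evaluate only at the $m$-th roots of unity, so the exponents collapse directly modulo $m$. You replace the root-counting argument by explicit character orthogonality, i.e.\ inverting the discrete Fourier transform on $\mathbb{Z}/m\mathbb{Z}$. That inversion is exactly where you need both a primitive $m$-th root of unity in $\mathbb{F}_p$ and $p\nmid m$.

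\textbf{What each buys.} The paper gets the finest statement (the case $m=p-1$) in one stroke, with no orthogonality computation, and every coarser $m$ comes for free. Your argument uses only $m$ evaluation points and is self-contained for each $m$; taking $m=p-1$ reproduces the paper's intermediate step. It also sidesteps a small bookkeeping point that the paper glosses over. A divisor $d\equiv 0\pmod{p-1}$ contributes the function $x^{p-1}$, not $x^0$, on $\mathbb{F}_p$. So the paper's identity should be read either on $\mathbb{F}_p^{*}$ or with the class $0$ represented by the exponent $p-1$. This is harmless, but your evaluation on $\mu_m\subset\mathbb{F}_p^{*}$ never runs into it.
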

\begin{proof}
    For each $0\leq i\leq p-2$ let $h_i$ denote the number of positive divisors of $n,$ congruent to $i$ modulo $p-1,$ and similarly for each $0\leq j\leq m-1$ let $\nu_j$ denote the number of positive divisors of $n,$ congruent to $j$ modulo $m.$ Since $p|n$ and $n$ is a weakly T-Fermat integer, the polynomial $T_n(x)$ vanishes on $\mathbb{F}_p.$ On the other hand, $$T_n(x)=(h_1-k)x+\sum_{i\neq 1}h_i x^i$$ in $\mathbb{F}_p[x],$ so $p|h_i$ for all $0\leq i\leq p-2, i\neq 1.$ It follows that $$\nu_r=h_r+h_{r+m}+h_{r+2m}+...\equiv 0(\text{mod}\ p)$$ for each $r\not\equiv 1(\text{mod}\ m).$    
\end{proof}
\begin{corollary}\label{PrimeDivisorBound}
    Any prime divisor of a composite T-Fermat integer $n$ is smaller than  $d(n).$
\end{corollary}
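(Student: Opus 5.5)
Let $n$ be a composite T-Fermat integer and $p$ a prime divisor of $n$. The plan is to apply Lemma \ref{KeyLemma} with the smallest nontrivial modulus, $m=2$. This is allowed as soon as $p$ is odd, since then $2\mid p-1$.

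\textbf{Step 1: $n$ is odd.} If $n$ were even, write $n=2n'$ with $n'>1$ odd. This uses that $n$ is square-free and composite. Take an odd prime $p\mid n'$ and apply Lemma \ref{KeyLemma} with $m=2$ and $r=0$. The even divisors of $n$ are exactly the numbers $2e$ with $e\mid n'$, so there are $d(n')=d(n)/2$ of them. Hence $p\mid d(n)/2$, and so $p\le d(n)/2<d(n)$. This already gives the bound for odd $p$, but to handle $p=2$ it is cleaner to exclude even $n$ directly. Take $x=-1$: then $T_n(-1)=\sum_{d\mid n}(-1)^d+d(n)$. The number of even divisors equals the number of odd divisors, both being $d(n)/2$, so $\sum_{d\mid n}(-1)^d=0$ and $T_n(-1)=d(n)$. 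Since $n$ is square-free, $d(n)=2^{\omega(n)}$, and $n\mid 2^{\omega(n)}$ is impossible for a square-free $n$ with an odd prime factor. So $n$ is odd.

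\textbf{Step 2: the bound.} Now every prime divisor $p$ of $n$ is odd, so $m=2$ divides $p-1$. Apply Lemma \ref{KeyLemma} with $r=0$. This only counts even divisors, of which there are none, so it gives nothing. Instead use $m=p-1$ with $r=0$, or more simply $x=-1$ directly: all divisors of $n$ are odd, so $T_n(-1)=-d(n)+d(n)=0$, which is also useless. The right tool is therefore Lemma \ref{KeyLemma} with $m=p-1$ and a residue $r\not\equiv 1$ that actually occurs. Take $r\equiv q \pmod{p-1}$, where $q$ is another prime divisor of $n$ with $q\not\equiv 1\pmod{p-1}$. Then the count $N_r$ of divisors congruent to $r$ is positive and divisible by $p$, so $p\le N_r<d(n)$. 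The inequality $N_r<d(n)$ is strict because the divisor $1$ is not counted.

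\textbf{Main obstacle.} The remaining case is when every divisor of $n$ is $\equiv 1\pmod{p-1}$. Then $q\equiv 1\pmod{p-1}$ for every prime $q\mid n$, and in particular $p\equiv 1\pmod{p-1}$, which forces $p-1\mid p-1$ trivially; so this condition gives no contradiction by itself. I would handle it by choosing the largest prime divisor $P$ of $n$. Then any other prime divisor $q$ satisfies $1<q<P$. Such a $q$ cannot be $\equiv 1\pmod{P-1}$ unless $q=1$, which is impossible; the only exception is $P=3$, $q=2$, but $n$ is odd. So for $P$ the residue $r=q\bmod (P-1)$ is nontrivial and occurs, and Lemma \ref{KeyLemma} gives $P\le N_r<d(n)$. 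Every other prime divisor is smaller than $P$, so the bound holds for all prime divisors of $n$.
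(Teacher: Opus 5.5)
Your final argument is correct and is essentially the paper's proof. You take the largest prime divisor $P$ and observe that a smaller prime divisor $q$ cannot satisfy $q\equiv 1\pmod{P-1}$. You then apply Lemma \ref{KeyLemma} with $m=P-1$ and $r=q$, which gives $P\le N_r\le d(n)-1$; the paper does the same with $q$ chosen as the smallest prime divisor. The parity argument in Step 1 and the preliminary attempts in Step 2 are unnecessary, since the lemma needs no parity assumption. Also, $P=3,\ q=2$ is not actually an exception, because $2\not\equiv 1\pmod 2$.
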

\begin{proof}
    Let $p$ and $q$ be the smallest and the largest prime divisors of $n$, respectively. Since $p\not\equiv 1(\text{mod}\ q-1),$ Lemma \ref{KeyLemma} implies that there are at least $q$ divisors of $n$ congruent to $p$ modulo $q-1.$ Hence $q<d(n)$, proving the claim.
\end{proof}

\begin{corollary}\label{fin_many_with_few_primes}
    For any number $M$ there are only finitely many composite T-Fermat integers having $M$ prime divisors.
\end{corollary}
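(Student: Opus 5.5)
Since $n$ is a T-Fermat integer, it is square-free, so $n=p_1p_2\cdots p_M$ with distinct primes and $d(n)=2^M$. By Corollary \ref{PrimeDivisorBound}, if $n$ is composite then every prime divisor satisfies $p_i<d(n)=2^M$. The plan is therefore a pure counting argument: for fixed $M$, the prime divisors of a composite T-Fermat integer with $M$ prime factors all lie in the finite set of primes below $2^M$.

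Concretely, I will carry out the following steps.

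\begin{enumerate}
\item Observe that square-freeness gives $d(n)=2^{\omega(n)}=2^M$.
\item Apply Corollary \ref{PrimeDivisorBound}, which requires $n$ composite, so that in particular $M\ge 2$. It gives $p_i<2^M$ for every $i$.
\item Conclude that $n<(2^M)^M=2^{M^2}$, so there are at most $\binom{\pi(2^M)}{M}$ such $n$, a finite number.
\end{enumerate}

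If ``having $M$ prime divisors'' is read as ``at most $M$'', the same bound $n<2^{M^2}$ still applies after summing over $k\le M$.

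There is no real obstacle here. The only point to watch is that Corollary \ref{PrimeDivisorBound} applies only to composite $n$, and that $d(n)$ depends only on $M$ precisely because $n$ is square-free. That is why the statement concerns T-Fermat integers rather than weakly T-Fermat integers.
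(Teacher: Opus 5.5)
Your proof is correct. It shares the paper's skeleton: Corollary \ref{PrimeDivisorBound} bounds every prime factor by $d(n)$, so $n<d(n)^M$.

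The only difference is the last step. The paper appeals to the general divisor bound $d(n)=o(n^{\varepsilon})$ from Hardy--Wright. You instead note that square-freeness already forces $d(n)=2^{M}$, which gives the explicit, fully elementary bound $n<2^{M^2}$.

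Your version is simpler and quantitative. The paper's appeal to $d(n)=o(n^{\varepsilon})$ gains nothing here, because the inequality $n<d(n)^M$ already relies on square-freeness, and square-freeness by itself pins down $d(n)$.
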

\begin{proof}
    If $n$ is a composite T-Fermat integer with $\omega(n)=M$, then any prime divisor of $n$ is less than $d(n)$ by Corollary \ref{PrimeDivisorBound}. Therefore, 
     $$n<d^M(n).$$ On the other hand, it is known that $d(n)=o(n^\varepsilon)$ for any $\varepsilon>0$ 
    \cite[Theorem~315, Ch.~18]{hardy1968introduction}. Combining these two facts
    proves the result. 
\end{proof}
\begin{corollary}\label{CoprimeWithGcd}
    If $n$ is a T-Fermat integer, then $\gcd(n,\varphi(n))=1.$
\end{corollary}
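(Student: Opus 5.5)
Since $n$ is a T-Fermat integer it is square-free, so $\varphi(n)=\prod_{q\mid n}(q-1)$. Hence $\gcd(n,\varphi(n))>1$ would mean that there are prime divisors $p,q$ of $n$ with $p\mid q-1$; note $p\neq q$ automatically. I will assume such a pair exists and derive a contradiction from Lemma \ref{KeyLemma}, applied with the prime $q$ and the modulus $m=p$, which divides $q-1$.

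The key observation concerns the residues of the divisors of $n$ modulo $p$. Write $n=pn'$ with $p\nmid n'$. The divisors of $n$ are then the divisors $e$ of $n'$ together with the numbers $pe$. The second family consists exactly of the divisors congruent to $0$ modulo $p$, and there are $d(n')=d(n)/2$ of them. Since $0\not\equiv 1\pmod p$, Lemma \ref{KeyLemma} gives $q\mid d(n)/2$. Because $n$ is square-free, $d(n)=2^{\omega(n)}$, so $d(n)/2=2^{\omega(n)-1}$. Therefore $q$ divides a power of $2$, which forces $q=2$.

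It remains to exclude $q=2$. If $q=2$, then $p\mid q-1=1$, which is impossible. This gives the contradiction, so $\gcd(n,\varphi(n))=1$.

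I do not expect any step to be a real obstacle. The one point that needs care is recognizing that the residue $r=0$ is admissible in Lemma \ref{KeyLemma}: the lemma only requires $r\not\equiv 1\pmod m$, and $0\not\equiv1$ because $m=p>1$. Once that is noted, the square-free counting $d(n)=2^{\omega(n)}$ finishes the argument.
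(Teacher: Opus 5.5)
Your proof is correct and is essentially the paper's argument: you apply Lemma \ref{KeyLemma} to the larger prime with the smaller prime as modulus and residue $0$, which shows that the larger prime divides $2^{\omega(n)-1}$. The only differences are that you swap the names of $p$ and $q$, and you close the contradiction via $q=2\Rightarrow p\mid 1$, whereas the paper simply notes that the larger prime is odd.
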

\begin{proof}
Assume to the contrary that $\gcd(n,\varphi(n))>1.$ Then there exist primes $p$ and $q$ dividing $n$ such that $p\equiv 1(\text{mod}\ q).$ It follows from Lemma \ref{KeyLemma} with $m=q$ that $2^{\omega(n)-1}=\nu_0$ is divisible by $p,$ a contradiction. 
\end{proof}
\begin{corollary}\label{T-Fermat is odd}
    A T-Fermat integer $n>2$ is odd.
\end{corollary}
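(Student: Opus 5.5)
The plan is to argue by contradiction, using Corollary \ref{CoprimeWithGcd}. Suppose $n>2$ is a T-Fermat integer and $n$ is even. Since $n$ is square-free, $2\,\|\,n$, so $n=2m$ with $m$ odd.

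First we show that $m>1$, i.e.\ $n$ has an odd prime divisor. If $m=1$ then $n=2$, which is excluded. So some odd prime $p$ divides $n$.

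Next we apply Corollary \ref{CoprimeWithGcd}, which says $\gcd(n,\varphi(n))=1$. Because $p$ is odd, $p-1$ is even. Hence $2\mid\varphi(p)\mid\varphi(n)$, and also $2\mid n$. This gives $2\mid\gcd(n,\varphi(n))$, contradicting the corollary. Therefore $n$ is odd.

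An alternative route goes directly through Lemma \ref{KeyLemma}, taking $m=2$ and the prime $p$ above; note that $2\mid p-1$. Every divisor $d$ of $n$ that is even satisfies $d\equiv 0\not\equiv 1\pmod 2$. The even divisors of $n$ number exactly $d(n)/2=2^{\omega(n)-1}$, so the lemma forces $p\mid 2^{\omega(n)-1}$, which is impossible for odd $p$. No step here is a real obstacle; the only point to check is that $n=2$ is the sole even T-Fermat integer without an odd prime factor, and that case is excluded by the hypothesis $n>2$.
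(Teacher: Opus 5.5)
Your proof is correct and follows the paper, which derives the statement directly from Corollary \ref{CoprimeWithGcd}: an even square-free $n>2$ has an odd prime factor $p$, so $2$ divides both $n$ and $p-1\mid\varphi(n)$. Your alternative via Lemma \ref{KeyLemma} with $m=2$ is the proof of Corollary \ref{CoprimeWithGcd} itself, taken with $q=2$, so it is equally valid but not a genuinely different route.
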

\begin{proof}
    This follows immediately from Corollary \ref{CoprimeWithGcd}.
\end{proof}

\begin{lemma}\label{AuxLemma}
    Let $p$ be a prime number, let $h$ be a positive integer coprime to $p-1,$ and let $f(x)=\sum_{i=0}^{\ell-1}(-1)^ix^{h^i}$. If $\ell$ is the order of $h$ modulo $p-1,$ then there exists $a\in\mathbb{F}_p$ such that $a^{h^\ell}=a$ and $f(a)\neq 0.$
\end{lemma}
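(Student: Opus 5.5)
The plan is to reduce $f$, viewed as a function on $\mathbb{F}_p^{*}$, to a nonzero polynomial of degree at most $p-2$ and then count roots. The first step is to note that the condition $a^{h^\ell}=a$ holds automatically. Since $\ell$ is the order of $h$ modulo $p-1$, we have $h^\ell\equiv 1\pmod{p-1}$ and $h^\ell\ge 1$. Fermat's little theorem then gives $a^{h^\ell}=a$ for every $a\in\mathbb{F}_p^{*}$, and also for $a=0$. So it suffices to find some $a\in\mathbb{F}_p^{*}$ with $f(a)\neq 0$.

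The case $p=2$ is handled directly. There $p-1=1$, so $\ell=1$ and $f(x)=x$, and $a=1$ works. From now on assume $p>2$.

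Next, reduce the exponents. For each $0\le i\le \ell-1$, let $e_i\in\{0,1,\dots,p-2\}$ be the residue of $h^i$ modulo $p-1$. Since $a^{p-1}=1$ for $a\in\mathbb{F}_p^{*}$, the function $a\mapsto f(a)$ on $\mathbb{F}_p^{*}$ coincides with $a\mapsto g(a)$, where
$$g(x)=\sum_{i=0}^{\ell-1}(-1)^i x^{e_i}\in\mathbb{F}_p[x].$$
The key observation is that the $e_i$ are pairwise distinct: $h^i\equiv h^j\pmod{p-1}$ with $0\le i<j\le\ell-1$ would contradict $\ell$ being the order of $h$ (as $h$ is invertible modulo $p-1$). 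So no cancellation occurs in $g$. Its coefficients are $\pm1$, which are nonzero in $\mathbb{F}_p$, hence $g$ is a nonzero polynomial. Moreover $\deg g\le p-2$. In fact every $e_i$ is coprime to $p-1$, so $e_i\neq 0$, although this is not even needed.

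Finally, count roots. If $f(a)=0$ for all $a\in\mathbb{F}_p^{*}$, then $g$ would have $p-1$ distinct roots while being nonzero of degree at most $p-2$, which is impossible. Hence some $a\in\mathbb{F}_p^{*}$ has $f(a)=g(a)\neq0$, and this $a$ also satisfies $a^{h^\ell}=a$.

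The only genuinely delicate point is the distinctness of the reduced exponents $e_i$, which is exactly where the hypothesis that $\ell$ is the order of $h$ enters. The rest is routine.
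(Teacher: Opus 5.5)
Your proof is correct and follows the same route as the paper: $a^{h^\ell}=a$ holds automatically because $p-1\mid h^\ell-1$, and the residues of $h^0,\dots,h^{\ell-1}$ modulo $p-1$ are distinct, so the reduced polynomial is nonzero of degree at most $p-2$ and cannot vanish on all of $\mathbb{F}_p^{*}$. You also spell out the root-counting step and the case $p=2$, which the paper leaves implicit.
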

\begin{proof}
    Observe that $a^{h^\ell}=a$ for any $a\in\mathbb{F}_p$ since $p-1|h^\ell-1.$ On the other hand, the numbers $h^0,h^1,...,h^{\ell-1}$ have different remainders upon division by $p-1.$ Consequently, the polynomial $f(x)$ does not vanish on $\mathbb{F}_p$. This proves the existence of an element with the required properties. 
\end{proof}
\begin{lemma}\label{OrderIsOdd}
    If $n$ is a T-Fermat integer, then for any primes $p$ and $q$ dividing $n,$ the order of $q$ modulo $p-1$ is an odd number. 
\end{lemma}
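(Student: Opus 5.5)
The plan is to work entirely inside $\mathbb{F}_p$ and exploit the fact that, because $n$ is square-free, the divisors of $n$ split into pairs $\{d, dq\}$ with $d\mid m:=n/q$.

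\textbf{Reductions.} If $q=p$, the order of $p$ modulo $p-1$ is $1$, so assume $q\neq p$. By Corollary \ref{T-Fermat is odd} we may assume $n$ is odd, so $p$ is odd; the case $n=2$ is trivial. By Corollary \ref{CoprimeWithGcd}, $q\nmid p-1$. Hence $\gcd(q,p-1)=1$, and multiplication by $q$ is a permutation of $\mathbb{Z}/(p-1)\mathbb{Z}$. Let $\ell$ be the order of $q$ modulo $p-1$.

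\textbf{Translating the hypothesis into counts.} As in the proof of Lemma \ref{KeyLemma}, reduce $T_n(x)$ modulo $x^p-x$. For residues $i$ modulo $p-1$, let $h_i$ be the number of divisors of $n$ congruent to $i$. Then $p\mid h_i$ for $i\not\equiv 1$, and $h_1\equiv d(n)\pmod p$. Now let $c_i$ be the number of divisors of $m$ congruent to $i$ modulo $p-1$. Since every divisor of $n$ is uniquely $d$ or $dq$ with $d\mid m$, we get
$$h_i=c_i+c_{q^{-1}i},$$
where $q^{-1}$ is taken modulo $p-1$.

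\textbf{The orbit argument.} Restrict attention to the orbit $1,q,q^2,\dots,q^{\ell-1}$ of the residue $1$ under multiplication by $q$. This orbit has exactly $\ell$ distinct elements. Put $a_k=c_{q^k}$, with indices taken modulo $\ell$. The conditions become:
$$a_k+a_{k-1}\equiv 0\pmod p\ \text{ for } 1\le k\le \ell-1,\qquad a_0+a_{\ell-1}\equiv d(n)\pmod p.$$
Suppose, for contradiction, that $\ell$ is even. Form the alternating sum $\sum_{k=0}^{\ell-1}(-1)^k(a_k+a_{k-1})$. Because $\ell$ is even, the cyclic sign pattern is consistent, and each $a_j$ appears once with sign $+$ and once with sign $-$. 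So the sum is $0$. On the other hand, every term except $k=0$ is $\equiv 0$, so the sum is $\equiv d(n)=2^{\omega(n)}\pmod p$. Hence $p\mid 2^{\omega(n)}$, which is impossible since $p$ is odd. Therefore $\ell$ is odd.

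\textbf{Where care is needed.} The main point to handle carefully is the bookkeeping that the orbit of $1$ has exactly $\ell$ distinct residues and that the index shift $i\mapsto q^{-1}i$ stays inside this orbit. This holds because $q$ is invertible modulo $p-1$. Lemma \ref{AuxLemma} is essentially the polynomial-function version of the same telescoping: one writes $U(x)=\sum_{d\mid m}x^d-d(m)x$, notes $U(x)+U(x^q)=0$ on $\mathbb{F}_p$, and then evaluates at a point where the alternating combination is nonzero. I would present the combinatorial count version above, since it is self-contained.
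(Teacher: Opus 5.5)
Your proof is correct, and it takes a route dual to the paper's. The paper works with \emph{values} of $T_n$. It uses Lemma~\ref{AuxLemma} to pick $a\in\mathbb{F}_p$ with $f(a)=\sum_{i=0}^{\ell-1}(-1)^ia^{q^i}\neq 0$ and builds the sequence $a_{i+1}=-a_i^q$, which closes up ($a_\ell=a_0$) because $\ell$ is even. It then telescopes $\sum_i\sum_{d\mid n}a_i^d$ via the pairing $d\leftrightarrow qd$ and the identity $(-x)^d=-x^d$ for odd $d$, and compares with $T_n(a_i)=0$ to get $d(n)f(a)=0$.

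You work with \emph{coefficients} instead. Lemma~\ref{KeyLemma} with $m=p-1$ gives the counts $h_i$, the same pairing becomes $h_i=c_i+c_{q^{-1}i}$, and you telescope along the orbit of the residue $1$. In both proofs the count vector $(h_r)$ is tested against a $(-1)$-eigenvector of multiplication by $q$ on residues modulo $p-1$, and such a vector exists only when $\ell$ is even. The paper implicitly uses $r\mapsto\sum_i(-1)^ia^{q^ir}$ and needs Lemma~\ref{AuxLemma} to make its value at $r=1$ nonzero. You use the alternating indicator of the orbit of $1$, whose value at $1$ is trivially $1$. The fact that a reduced polynomial vanishing on $\mathbb{F}_p$ is zero enters once in each proof: in Lemma~\ref{AuxLemma} for the paper, and in Lemma~\ref{KeyLemma} for you.

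Your version gains several things:
\begin{itemize}
\item It is purely combinatorial and does not need the sign identity $(-x)^d=-x^d$; apart from the shared inputs, only $p\nmid d(n)$ is used.
\item It yields a bit more, namely $c_{q^k}\equiv(-1)^kc_1\pmod p$ along the orbit, and hence $2c_1\equiv d(n)$.
\item It is the prototype of the formalism of Section~\ref{sec:MainTheoremSingle}. With modulus $p-1$ your identity reads $\delta_n=(I+P_q)\delta_{n/q}$, and the obstruction is the same as $1+\eta^\ell=0$ in Corollary~\ref{FermatPrimeRefined}.
\end{itemize}
The paper's version, for its part, stays in the polynomial-function language of the definition. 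It applies $T_n\equiv 0$ on $\mathbb{F}_p$ directly at $\ell$ points, with no coefficient extraction.

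One small slip in your closing aside. With $U(x)=\sum_{d\mid m}x^d-d(m)x$ one gets $U(x)+U(x^q)=d(m)(x-x^q)$ on $\mathbb{F}_p$, not $0$. The correct identity is $V(x)+V(x^q)=d(n)x$ for $V(x)=\sum_{d\mid m}x^d$. This does not affect your main argument.
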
 
\begin{proof}
    By Corollary \ref{CoprimeWithGcd} the order $\ell$ of $q$ modulo $p-1$ is well defined. We assume to the contrary that $\ell$ is an even number. According to Lemma \ref{AuxLemma} with $h=q$, there exists $a\in\mathbb{F}_p$ such that $a^{q^\ell}=a$ and  $f(a)=\sum_{i=0}^{l-1}(-1)^ia^{q^i}\neq 0$. We consider the sequence $(a_i)_{i=0}^\ell\subset\mathbb{F}_p$ defined by $a_0=a$ and $a_{i+1}=-a^q_i$ for $0\leq i\leq \ell-1.$ Since $\ell$ is even by the assumption, we have $a_{\ell}=a^{q^\ell}_0=a_0.$ It follows that

    \begin{align*}
    &\sum_{i=0}^{\ell-1}\sum_{d|n}a^d_i=\sum_{i=0}^{\ell-1}\left(\sum_{d|\frac{n}{q}}a^d_i+\sum_{d|\frac{n}{q}}a^{qd}_i\right)=\sum_{i=0}^{\ell-1}\left(\sum_{d|\frac{n}{q}}a^d_{i+1}+\sum_{d|\frac{n}{q}}a^{qd}_i\right)=\\ &=\sum_{d|\frac{n}{q}}\sum_{i=0}^{l-1}\left(a^d_{i+1}+a^{qd}_i\right)=\sum_{d|\frac{n}{q}}\sum_{i=0}^{l-1}\left(a^d_{i+1}-a^d_{i+1}\right)=0,\\
    \end{align*}
     as every $d|n$ is odd by Corollary \ref{T-Fermat is odd}. Since $p|n$ and $n$ is a T-Fermat integer, we have $T_n(a_i)\equiv 0(\text{mod} \ p)$, hence $$\displaystyle\sum_{d|n}a^d_i=d(n)a_i$$ in $\mathbb{F}_p$. This gives, $$d(n)f(a)=d(n)\sum_{i=0}^{\ell-1}a_i=\sum_{i=0}^{\ell-1}d(n)a_i=\sum_{i=0}^{\ell-1}\sum_{d|n}a^d_i=0$$ in $\mathbb{F}_p$ which is impossible. Indeed, $f(a)\neq 0$ by construction, and $d(n)=2^{\omega(n)}$ is coprime to $p.$ The attained contradiction finishes the proof.
\end{proof}

\subsection{Proof of Theorem 1.4}

\begin{proof}

\noindent (a) Let $p,q|n$ be prime divisors. By Lemma \ref{OrderIsOdd}, there exists an odd positive integer $\ell$ such that $$q^\ell\equiv 1(\text{mod}\ p-1).$$ Since $\displaystyle q^\ell-1=(q-1)\sum_{i=0}^{\ell-1}q^i$ and the second factor is odd, we obtain $$\nu_2(q-1)\geq \nu_2(p-1).$$ By symmetry, equality holds.
\vskip 2pt
\noindent (b) Let $r$ be a Fermat prime such that $r|p-1$ for some prime divisor $p|n$, and let $q$ be any prime divisor of $n$. By Lemma \ref{OrderIsOdd}, $q^\ell\equiv 1(\text{mod}\ p-1)$ with $\ell$ odd, so $$q^\ell\equiv 1(\text{mod}\ r).$$ Consequently, $$q=q^{\gcd(\ell,r-1)}\equiv 1(\text{mod}\ r).$$

\end{proof}

\subsection{Proof of Theorem 1.5}
\begin{proof}
    Let $p$ be a prime divisor of the composite T-Fermat integer $n$.  By Lemma \ref{KeyLemma}, for every residue class modulo $p-1$ other than the class of 1, the number of positive divisors of $n$ contained in that class is divisible by $p$. Since $d(n)=2^{\omega(n)}$ is coprime to $p$, the number of divisors of $n$ congruent to $1(\text{mod}\ p-1)$ is not divisible by $p$. By Corollary \ref{CoprimeWithGcd}, $\gcd(d,p-1)=1$ for every divisor $d$ of $n$, so the condition $d\equiv n(\text{mod}\ p-1)$ is equivalent to the condition $\frac{n}{d}\equiv 1(\text{mod}\ p-1)$. It follows that the number of divisors of $n$ congruent to $n$ modulo $p-1$ is equal to the number of divisors congruent to 1 modulo $p-1.$ Since the latter is not divisible by $p$, we infer that $n\equiv 1(\text{mod}\ p-1)$ is the only possibility. Consequently, $n$ is a composite square-free number such that $p-1|n-1$ for every prime  $p|n$. Applying Korselt's criterion, we deduce that $n$ is a Carmichael number.
\end{proof}

\section{Proof of Theorem 1.7 and corollaries}\label{sec:MainTheoremSingle}

\subsection{Proof of Theorem 1.7}

\begin{proof}
    Let $1=r_1<r_2<\ldots<r_k=s-1$ be the complete list of residues coprime to $s$, where $k=\varphi(s)$. For each positive integer $h$ we define the $k\times 1$ vector $$\delta_h=\left[h_{r_1}, h_{r_2},\dots,h_{r_k}\right]^T,$$ where $h_{r_i}$ is the number of positive divisors of $h$ congruent to $r_i$ modulo $s$. Observe that for any integer $a$ coprime to $h,$ $$\delta_{ah}=M_a\delta_h$$ for the $k\times k$ matrix $M_a=(m_{ij}),$ where $m_{ij}=a_{r^{-1}_jr_i}, 1\leq i,j\leq k$. If, in addition, $a$ is a prime number coprime to $s,$ then $$M_a=I+P_a,$$ where $P_a$ is the column permutation matrix, corresponding to the permutation $\sigma$ given by $r_{\sigma(i)}\equiv ar_i (\text{mod}\ s)$ for all $1\leq i\leq k$. That is, $P_a=(p_{ij})$ is the matrix defined by
    $$p_{ij}=\begin{cases} 1, & r_i\equiv ar_j(\text{mod}\ s)\\
    0, & \text{otherwise}\end{cases}$$
    Moreover, if $\ell\in\{1,2,\ldots,k\}$ is the unique integer such that $a\equiv g^\ell(\text{mod}\ s)$, then $M_a=I+P^\ell_g,$ where this time $P_g$ is the column permutation matrix corresponding to the action of $g$ on the group $(\mathbb{Z}/s\mathbb{Z})^{*}.$ By Corollary \ref{CoprimeWithGcd}, $n$ is coprime to $\varphi(n)$ and is therefore coprime to $s.$ Using the equality $\delta_{ph}=M_p\delta_h$ several times, we get $$\delta_n=\left(\prod_{p|n}M_p\right)\delta_1=\left(\prod_{\ell=1}^{k}\left(I+P^\ell_g\right)^{\omega_{g^\ell}}\right)e_1=f(P_g)e_1,$$ where $e_1$ is the first standard basis vector. On the other hand, Lemma \ref{KeyLemma} implies that $\delta_n=d(n)e_1 (\text{mod}\ p)$, so $$f(P_g)e_1=d(n)e_1+pv$$ for some vector $v\in\mathbb{Z}^{k}.$ Since $g$ is a generator of $(\mathbb{Z}/s\mathbb{Z})^{*},$ it follows that the corresponding permutation is a cycle and the characteristic polynomial of $P_g$ is $t^{k}-1.$ Unitarily diagonalizing $P_g$, we get $P_g=Q\Lambda Q^{-1},$ where  $$\Lambda=\mathrm{diag}(1,\zeta,...,\zeta^{k-1}),$$ $Q$ is unitary, and the $j$th column of $Q$ has the form $$\frac{1}{\sqrt{k}}\left[\zeta^{n_{1}(j-1)},\zeta^{n_{2}(j-1)},\ldots,\zeta^{n_{k}(j-1)}\right]^T$$ for a suitable permutation $\{n_1,n_2,...,n_k\}$ of $\{1,2,...,k\}$ depending on $j$. Let $v_0$ be the first column of $Q^{-1}$. Combining the equalities $P_g=Q\Lambda Q^{-1}, Q^{*}=Q^{-1},$ and $ f(P_g)e_1=d(n)e_1+pv$, we get $$\left(f(\Lambda)-d(n)I\right)\left(\sqrt{k}v_0\right)=p\left(\sqrt{k}Q^{*}v\right).$$Since all the entries of $\sqrt{k}v_0$ are roots of unity and $\sqrt{k}Q^{*}v\in\mathbb{Z}[\zeta]^k,$ we obtain $$f(\eta)-d(n)\in p\mathcal{O}_K$$ for any $k=\varphi(s)$th root of unity $\eta.$ 
\end{proof}

\subsection{Proof of Corollary 1.8}\label{Proof of Cor 1.8:subsec}

\begin{proof}
    The condition $\nu_2(\ell)<\nu_2(\varphi(s))$ implies that $\ell=2^tb$, where $t<m=\nu_2(\varphi(s))$ and $b$ is odd. Since $2^{t+1}|\varphi(s)$, there is a primitive $2^{t+1}$th root of unity $\eta$ among the $\varphi(s)$th roots of unity. Since $1+\eta^{2^t}=0$ and $l=2^tb$ with $b$ odd, it follows that $1+\eta^\ell=0$. If $\omega_{g^\ell}>0$, we obtain from Theorem \ref{MainTheoremSingle} the congruence $$2^{\omega(n)}=d(n)\equiv f(\eta)=0(\text{mod}\ p)$$ in $\mathcal{O}_K$ which is impossible since $p$ is odd. Consequently, $\omega_{g^\ell}=0$. 
\end{proof}

\subsection{Proof of Corollary 1.10}

\begin{proof} 

 (a)  By Theorem \ref{MainTheoremSingle} $$d^t(n)\equiv \prod_{\eta^t=1}f(\eta)=\mathrm{Res}(x^t-1,f(x))=\prod_{\ell=1}^{\varphi(s)}\mathrm{Res}(x^t-1,x^\ell+1)^{\omega_{g^\ell}}(\text{mod}\ p)$$ in $\mathcal{O}_K$, where $\mathrm{Res}(f,g)$ is the resultant of $f$ and $g$. Using the identity $$\mathrm{Res}(x^u-a, x^v-b)=(-1)^v\left(a^{\frac{v}{d}}-b^{\frac{u}{d}}\right)^{\gcd(u,v)},$$ we get $$\mathrm{Res}(x^t-1,x^\ell+1)=2^{\gcd(t,\ell)}$$ for any $\ell$ satisfying the condition $\nu_2(\ell)\geq\nu_2(\varphi(s))$. Together with Corollary \ref{FermatPrimeRefined}, this yields $$d^t(n)\equiv 2^{\sum_{\ell=1}^{\varphi(s)}\gcd(t,\ell)\omega_{g^\ell}}(\text{mod}\ p).$$ Taking into account the equality $d(n)=2^{\omega(n)}=2^{\sum_{\ell=1}^{\varphi(s)}\omega_{g^\ell}},$ we obtain $$2^{\sum_{\ell=1}^{\varphi(s)}\left(t-\gcd(t,\ell)\right)\omega_{g^\ell}}\equiv 1(\text{mod}\ p\mathcal{O}_K).$$ Therefore, the same congruence is valid in $\mathbb{Z}$ as well.

(b) Taking norms on both sides of the congruence $f(\zeta)\equiv d(n)(\text{mod}\ p)$, which is valid due to Theorem \ref{MainTheoremSingle}, we get $$d(n)^{[K:\mathbb{Q}]}=N_{K/\mathbb{Q}}(d(n))\equiv N_{K/\mathbb{Q}}(f(\zeta))=\prod_{\ell=1}^{\varphi(s)}\left(N_{K/\mathbb{Q}}\left(1+\zeta^\ell\right)\right)^{\omega_{g^\ell}}(\text{mod}\ p).$$
Observe that the order of $\zeta^\ell$ is odd whenever $\nu_2(\ell)\geq\nu_2(\varphi(s))$. Furthermore, it is greater than one if and only if $\ell<\varphi(s)$. Therefore, we have the equality 
$\omega_{g^\ell}=0$ whenever $\nu_2(\ell)<\nu_2(\varphi(s))$. Moreover, if $m>1$ is an odd divisor of $\varphi(s)$, $\zeta_m$ is a primitive $m$th root of unity, and $\Phi_m$ is the $m$th cyclotomic polynomial, then $\Phi_m(-1)=1$ and $$N_{K/\mathbb{Q}}(1+\zeta_m)=\left(\Phi_m(-1)\right)^{\frac{\varphi(\varphi(s))}{\varphi(m)}}=1.$$
Finally, we obtain $$2^{[K:\mathbb{Q}]\omega(n)}=d(n)^{[K:\mathbb{Q}]}\equiv (N_{K/\mathbb{Q}}(2))^{\omega_{1}}=2^{[K:\mathbb{Q}]\omega_1}(\text{mod}\ p),$$ which is the desired result in light of the equality $[K:\mathbb{Q}]=\varphi(\varphi(s))$. 
\end{proof}  

\subsection{Proof of Corollary 1.11}

\begin{proof}
Observe that the order of $2$ modulo $p$ is divisible by $q$. Indeed, by Fermat's little theorem, $$p|2^{p-1}-1=2^{2^hq^\nu}-1.$$ If this order were not divisible by $q$, then $p$ would have to divide $$2^{2^5}-1=\prod_{k=0}^4(1+2^{2^k}),$$ which is the product of the five known Fermat primes. Since $p$ is not a Fermat prime by assumption, this is impossible. According to Theorem \ref{MainTheoremSingle} with $s=q$ we get $$2^{\varphi(q-1)(\omega(n)-\omega_1)}\equiv 1(\text{mod}\ p).$$ It follows that $q| \omega(n)-\omega_1$, as $\varphi(q-1)$ is coprime to $q$. If $q\geq \omega(n)$, we obtain $\omega_1=\omega(n)$, so all the prime divisors of $n$ are congruent to 1 modulo $q$. The conclusion follows. 
\end{proof}
\section{Proof of Theorem 1.12 and corollaries}\label{sec: MainTheoremMultiple}

\subsection{Proof of Theorem 1.12}

\begin{proof}
This proof follows the notation and strategy of the proof of Theorem  \ref{MainTheoremSingle}. Under the Chinese remainder identification $$(\mathbb{Z}/s\mathbb{Z})^*\cong \prod_{i=1}^r (\mathbb{Z}/s_i\mathbb{Z})^*,$$ multiplication by a prime corresponding to $L=(\ell_1,...,\ell_r)$ is represented by the matrix $\displaystyle\bigotimes_{i=1}^{r}  P^{\ell_i}_{g_i}$. We can thus write $$\delta_n=\left(\prod_{p|n}M_p\right)\delta_1=\left(\prod_{\ell_1=1}^{\varphi(s_1)}\cdots \prod_{\ell_r=1}^{\varphi(s_r)}\left(I+\bigotimes_{i=1}^{r}  P^{\ell_i}_{g_i}\right)^{\omega_{L}}\right)e_1.$$
Using the unitary diagonalizations $P_{g_i}=Q_i\Lambda_i Q^{-1}_i$ and the relation $\delta_n=d(n)e_1 (\text{mod}\ p)$, we get $$\left(\Lambda-d(n)I\right)\left(\sqrt{\varphi(s)}v_0\right)=p\left(\sqrt{\varphi(s)}Q^{*}v\right),$$ where $Q=\displaystyle\bigotimes_{i=1}^r Q_i, v\in\mathbb{Z}^{\varphi(s)},$ $v_0$ is the first column of $Q^{*}$, and $$\displaystyle \Lambda=\prod_{\ell_1=1}^{\varphi(s_1)}\cdots \prod_{\ell_r=1}^{\varphi(s_r)}\left(I+\bigotimes_{i = 1}^{r} \Lambda^{\ell_i}_i\right)^{\omega_{L}}.$$ Finally, note that all the  entries of $\sqrt{\varphi(s)}v_0$ are roots of unity, so
$$f(\eta_1,\eta_2,...,\eta_r)=\displaystyle\prod_{\ell_1=1}^{\varphi(s_1)}\cdots \prod_{\ell_r=1}^{\varphi(s_r)}\left(1+\prod_{i=1}^r \eta^{\ell_i}_i\right)^{\omega_{L}}=d(n)(\text{mod}\ p\mathcal{O}_K)$$
    for any $\varphi(s_i)$th roots of unity $\eta_i, 1\leq i\leq r$.
\end{proof}

\subsection{Proof of Corollary 1.13}

\begin{proof}
    For simplicity we introduce the following additional notations: 
    \begin{itemize}
    \item $X(n_1,...,n_r):=\left\{(\eta_1,...,\eta_r)\ |\ \eta^{n_j}_j=1, 1\leq j\leq r\right\}$
    \item 
    $T:=(t_1,...,t_r), L:=(\ell_1,...,\ell_r),$ and $\gcd(T,L):=\prod_{j=1}^r \gcd(t_j,\ell_j)$
    \item 
    $\mu_j:=\frac{t_j}{\gcd(t_j,\ell_j)}, 1\leq j\leq r$ and $M:=\mathrm{lcm}(\mu_1,...,\mu_r)$
    \end{itemize}
    We fix integers $\ell_1,\ell_2,...,\ell_r$ satisfying the conditions $$\nu_2(\ell_j)\geq \nu_2(\varphi(s_j)), 1\leq j\leq r.$$ Clearly, $M$ is odd and $$\prod_{\eta^M=1}(x+\eta)=x^M+(-1)^{M+1}=x^M+1.$$ Plugging in $x=1$, we get $\prod_{\eta^M=1}(1+\eta)=2$. Thus,
    \begin{align*}
    &\prod_{X(t_1,...t_r)}\left(1+\prod_{j=1}^r \eta^{\ell_j}_j\right)=\left(\prod_{X(\mu_1,...,\mu_r)}\left(1+\prod_{j=1}^r\eta_j\right)\right)^{\gcd(T,L)}=\\                  &=\left(\prod_{\eta^M=1}(1+\eta)\right)^{\gcd(T,L)}=2^{\gcd(T,L)}.\\ 
    \end{align*}
    In light of Theorem \ref{MainTheoremMultiple} and Corollary \ref{FermatPrimeRefined}, $\omega_{L}=0$ whenever $\nu_2(\ell_j)<\nu_2(\varphi(s_j))$ for some $j$. Therefore,
    \begin{align*}
    & 2^{\left(\sum_{\ell_1=1}^{\varphi(s_1)}\cdots \sum_{\ell_r=1}^{\varphi(s_r)}\omega_{L}\right)\prod_{j=1}^r t_j}=2^{\omega(n)\prod_{j=1}^r t_j}=\\ &= d(n)^{\prod_{j=1}^r t_j} 
    \equiv \prod_{X(t_1,\dots,t_r)} f(\eta_1,\eta_2,\dots,\eta_r)= \\
    &= \prod_{\ell_1=1}^{\varphi(s_1)}\cdots \prod_{\ell_r=1}^{\varphi(s_r)}\left(\prod_{X(t_1,\dots,t_r)}\left(1+\prod_{j=1}^r \eta^{\ell_j}_j\right)\right)^{\omega_{L}}= \\
    &= 2^{\left(\sum_{\ell_1=1}^{\varphi(s_1)}\cdots \sum_{\ell_r=1}^{\varphi(s_r)}\gcd(T,L)\omega_{L}\right)} \pmod p.
    \end{align*}

\end{proof}

\subsection{Proof of Corollary 1.14}

\begin{proof}
    Taking norms on both sides of the congruence $f(\eta_1,\eta_2)\equiv d(n)(\text{mod}\ p\mathcal{O}_K)$, which is valid due to Theorem \ref{MainTheoremMultiple}, we get \begin{align*}
    &d(n)^{[K:\mathbb{Q}]}=N_{K/\mathbb{Q}}(d(n))\equiv N_{K/\mathbb{Q}}(f(\eta_1,\eta_2))=\\&=\prod_{\ell_1=1}^{\varphi(s_1)}\prod_{\ell_2=1}^{\varphi(s_2)}\left(N_{K/\mathbb{Q}}\left(1+\eta^{\ell_1}_1\eta^{\ell_2}_2\right)\right)^{\omega_{L}}(\text{mod}\ p).
    \end{align*}
    We have that $\omega_L=0$
     whenever $\nu_2(\ell_1)<\nu_2(\varphi(s_1))$ or $\nu_2(\ell_2)<\nu_2(\varphi(s_2))$. Additionally,  
     $N_{K/\mathbb{Q}}(1+\zeta_m)=1$ provided that $m>1$ is an odd divisor of $\varphi(s)$  and $\zeta_m$ is a primitive $m$th root of unity. Hence, $$2^{[K:\mathbb{Q}]\omega(n)}=d(n)^{[K:\mathbb{Q}]}\equiv (N_{K/\mathbb{Q}}(2))^{S}=2^{[K:\mathbb{Q}]S}(\text{mod}\ p),$$ where $$S=\sum_{\left\{(\ell_1,\ell_2)\ |\  \eta^{\ell_1}_1\eta^{\ell_2}_2=1\right\}}\omega_{L}.$$ Note that one may choose $\eta_1=\zeta^{\varphi(s_2)}$ and $\eta_2=\zeta^{\varphi(s_1)}.$ Clearly, the condition $\eta^{\ell_1}_1\eta^{\ell_2}_2=1$ is equivalent to the condition $$\varphi(s)|\varphi(s_2)\ell_1+\varphi(s_1)\ell_2,$$ which in turn is equivalent to the existence of $k\in\mathbb{Z}$ such that $$\ell_1\equiv \frac{\varphi(s_1)}{d}k \ (\text{mod}\ \varphi(s_1)) \quad \text{and} \quad\ell_2\equiv -\frac{\varphi(s_2)}{d}k \ (\text{mod}\ \varphi(s_2)).$$ In terms of these elements, the congruences can be restated as $$x^k_0\equiv g^{\ell_i}_i(\text{mod}\ s_i), i=1,2.$$ Since the order of $x_0$ modulo $s$ is $d$, it follows that $$S=\sum_{k=0}^{d-1}\omega_{x^k_0}.$$ To conclude the proof, we simply note that
    $[K:\mathbb{Q}]=\varphi(\varphi(s))$.
\end{proof}

\begingroup
\raggedright

\end{document}